\newtheorem{thm}{Theorem}
\newtheorem{lem}[thm]{Lemma}
\newtheorem{prop}[thm]{Proposition}
\newtheorem{cor}[thm]{Corollary}
\theoremstyle{definition}
\newtheorem{ex}[thm]{Example}
\newtheorem{rem}[thm]{Remark}
\newcommand{\Z}{\mathbb{Z}}
\newcommand{\Q}{\mathbb{Q}}
\newcommand{\C}{\mathbb{C}}
\renewcommand{\P}{\mathbb{P}}
\newcommand{\G}{\mathbb{G}}
\newcommand{\into}{\hookrightarrow}
\newcommand{\xto}[2]{\xrightarrow[#1]{#2}}
\newcommand{\ot}{\leftarrow}
\newcommand{\xot}[2]{\xleftarrow[#1]{#2}}
\newcommand{\OO}{\mathcal{O}}
\newcommand{\VV}{\mathcal{V}}
\renewcommand{\phi}{\varphi}
\renewcommand{\epsilon}{\varepsilon}
\DeclareMathOperator{\Hom}{Hom}
\DeclareMathOperator{\GL}{GL}
\DeclareMathOperator{\id}{id}
\DeclareMathOperator{\im}{im}
\DeclareMathOperator{\pt}{pt}
\DeclareMathOperator{\sst}{sst}
\DeclareMathOperator{\st}{st}
\DeclareMathOperator{\dimvect}{\underline{\mathrm{dim}}}
\newcommand{\one}{\mathbf{1}}
\newcommand{\sub}{\subseteq}
\renewcommand{\hat}[1]{\widehat{#1}}
\begin{document}
\allowdisplaybreaks

\newcommand{\arXivNumber}{1911.03288}

\renewcommand{\PaperNumber}{096}

\FirstPageHeading

\ShortArticleName{Torus-Equivariant Chow Rings of Quiver Moduli}

\ArticleName{Torus-Equivariant Chow Rings of Quiver Moduli}

\Author{Hans FRANZEN}

\AuthorNameForHeading{H.~Franzen}

\Address{Fakult\"at f\"ur Mathematik, Ruhr-Universit\"at Bochum,\\ Universit\"atsstra{\ss}e 150, 44780 Bochum, Germany}
\Email{\href{mailto:hans.franzen@rub.de}{hans.franzen@rub.de}}
\URLaddress{\url{www.ruhr-uni-bochum.de/ffm/Lehrstuehle/Algebra/franzen.html.en}}

\ArticleDates{Received March 14, 2020, in final form September 16, 2020; Published online September 30, 2020}

\Abstract{We compute rational equivariant Chow rings with respect to a torus of quiver moduli spaces. We derive a presentation in terms of generators and relations, use torus localization to identify it as a subring of the Chow ring of the fixed point locus, and we compare the two descriptions.}

\Keywords{torus actions; equivariant Chow rings; torus localization; quiver moduli}

\Classification{14C15; 16G20}

\section{Introduction}

In this paper we study actions of tori on moduli spaces of quiver representations and determine their rational equivariant Chow rings as introduced by Edidin and Graham in \cite{EG:98}. More precisely, given a quiver $Q$, a dimension vector $d$, and a stability condition $\theta$, we look at the action of \mbox{$T = \G_m^{Q_1}$} which acts on $M^{\theta-\st}(Q,d)$ by scaling along the arrows of $Q$. This action was introduced by Weist in \cite{Weist:13} and has since been used successfully, for instance in \cite{RSW:12, RW:13, RW:20}. The main objective of this paper is to determine the pull-back $i^*\colon  A_T^*\big(M^{\theta-\st}(Q,d)\big)_\Q \to A_T^*\big(M^{\theta-\st}(Q,d)^T\big)_\Q$ of the embedding $i\colon M^{\theta-\st}(Q,d)^T \to M^{\theta-\st}(Q,d)$ of the fixed point locus explicitly and hence to find the $T$-equivariant Chow ring of $M^{\theta-\st}(Q,d)$ as a subring of $A^*\big(M^{\theta-\st}(Q,d)^T\big)_\Q \otimes A_T^*(\pt)_\Q$. This is called torus localization and it is an idea that goes back to Goresky--Kottwitz--MacPherson's paper \cite{GKM:98} and even further to Chang--Skjelbred~\cite{CS:74}, where equivariant cohomology rings of equivariantly formal spaces with respect to a compact torus are studied. Brion developed in~\cite{Brion:97} the algebraic counterpart of this theory. He gives descriptions of the equivariant Chow ring for an action of an algebraic torus on a variety. Both articles provide results of various generalities depending on the nature of the action of the torus. The most explicit description is possible if there are only finitely many fixed points and finitely many one-dimensional orbits. For quiver moduli however, this is hardly ever fulfilled. The fixed points of the aforementioned torus action are usually not isolated and even if they are, there might be infinitely many one-dimensional orbits. Still there is one particularly easy class of quiver moduli in which this condition holds. If we assume that the entries of~$d$ are all~$1$ then the moduli space is toric. This toric variety is well understood, see~\cite{AH:99}. We study this case separately.

One of the main results of \cite{Weist:13} asserts that each connected component of the locus of $T$-fixed points of $M^{\theta-\st}(Q,d)$ is isomorphic to a stable moduli space of the universal abelian covering quiver $\smash{\hat{Q}}$ of $Q$. This result is the key ingredient to determine the localization map. We use that $M^{\theta-\st}(Q,d)$ is an algebraic quotient $R(Q,d)^{\theta-\st}/PG_d$ by a reductive group to identify the $T$-equivariant Chow ring of the quotient with the $PG_d \times T$-equivariant Chow ring of the total space. We can adapt a method of~\cite{FR:18:Sst_ChowHa} to obtain a presentation of this ring in terms of Chern roots of certain $PG_d \times T$-equivariant vector bundles. This is Theorem~\ref{t:taut}. We then exhibit in Theorem~\ref{t:main} the images of these generators under the localization map $i^*$ using Weist's characterization of the fixed points. This description is most useful if the torus acts with finitely many fixed points. We illustrate this result in an example where there are finitely many fixed points but infinitely many one-dimensional orbits. After that we turn to the case where $d=\one$, i.e., $d$ consists of ones entirely. In this case, $PG_\one$ is itself a torus which embeds into~$T$. We study the action of the cokernel~$T_0$ on the moduli space. We determine the fixed points and the one-dimensional orbits. Our description of the image of the localization map then follows from a result of Brion~\cite{Brion:97}.

\section{Generalities on quiver moduli} \label{s:genl}

Fix an algebraically closed field $k$. Let $Q$ be a quiver. We denote its set of vertices with $Q_0$ and its set of arrows with $Q_1$. With $s(\alpha)$ and $t(\alpha)$ we denote the source and target of an arrow $\alpha$. We assume throughout that $Q$ is connected, which means that in the underlying unoriented graph, all vertices are connected by a path. Furthermore, we suppose that for every two vertices $i,j \in Q_0$ there are just finitely many $\alpha \in Q_1$ such that $s(\alpha) = i$ and $t(\alpha) = j$. If $Q_0$ (and hence also $Q_1$) is finite, then we say that $Q$ is a finite quiver.

We assume that the reader is familiar with the basics of representation theory of quivers. For an introduction to the subject we refer to \cite[Section~II.1]{ASS:06}.

Let $\Lambda(Q)$ be the set of vectors $d \in \smash{\Z^{Q_0}}$ such that $d_i = 0$ for all but finitely many $i \in Q_0$.
For $d \in \Lambda_+(Q) := \Lambda(Q) \cap \smash{\Z_{\geq 0}^{Q_0}}$ we fix $k$-vector spaces $V_i$ of dimension $d_i$ and we consider the finite-dimensional vector space
\begin{displaymath}
R(Q,d) = \bigoplus_{\alpha \in Q_1} \Hom(V_{s(\alpha)},V_{t(\alpha)}).
\end{displaymath}
Its elements are representations of $Q$ of dimension vector $d$. On $R(Q,d)$ we have an action of the group $G_d = \prod_{i \in Q_0} \GL_{d_i}(k)$ by
\begin{displaymath}
g \cdot M = \big(g_{t(\alpha)}M_\alpha g_{s(\alpha)}^{-1}\big)_{\alpha \in Q_1}.
\end{displaymath}
Two elements of $R(Q,d)$, considered as representations of $Q$, are isomorphic if and only if they lie in the same $G_d$-orbit. Note that the image $\Delta \sub G_d$ of the diagonal embedding of $\G_m$ acts trivially on $R(Q,d)$ so the action of $G_d$ descends to an action of $PG_d = G_d/\Delta$.

A $\Z$-linear map $\theta\colon \Lambda(Q) \to \Z$ is called a stability condition. For a vector $d \in \Lambda_+(Q) - \{0\}$ we define the slope of $d$ as
\begin{displaymath}
\mu(d) = \frac{\theta(d)}{\sum_i d_i}.
\end{displaymath}
Define $\mu(M) := \mu(\dimvect M)$.
A representation $M$ of $Q$ is called $\theta$-semi-stable ($\theta$-stable) if $\mu(M') \leq \mu(M)$ (resp.\ $\mu(M') < \mu(M)$) for every non-zero proper subrepresentation~$M'$ of~$M$.

We regard the vector space $R(Q,d)$ as a variety and $G_d$ as a linear algebraic group. The sets $R(Q,d)^{\theta-\sst}$ and $R(Q,d)^{\theta-\st}$ of semi-stable resp.\ stable points of $R(Q,d)$ are Zariski open (but they might be empty). Obviously
\begin{displaymath}
R(Q,d)^{\theta-\st} \sub R(Q,d)^{\theta-\sst} \sub R(Q,d).
\end{displaymath}
King shows in \cite[Proposition~3.1]{King:94} that the set of semi-stable points of $R(Q,d)$ agrees with the set of semi-stable points with respect to the $G_d$-linearization of the (trivial) line bundle on $R(Q,d)$ which is given by the character
\begin{displaymath}
\chi_\theta(g) = \prod_{i \in Q_0} \det(g_i)^{\theta(d)-\theta_i\sum_jd_j}.
\end{displaymath}
Note that $\chi_\theta$ is trivial on $\Delta$ whence it descends to a character of $PG_d$. The twist in the exponent in the definition of the character $\chi_\theta$ was introduced in \cite[Section~3.4]{Reineke:08} to get rid of the requirement $\theta(d) = 0$. The set $R(Q,d)^{\theta-\st}$ is the set of properly stable points (in the sense of Mumford \cite[Defenition~1.8]{GIT:94}) with respect to the aforementioned linearized line bundle. This holds as the isotropy group of a stable representation is $\Delta$ by Schur's lemma.

We define $M^{\theta-\sst}(Q,d) = R(Q,d)^{\theta-\sst}/\!\!/PG_d$ and $M^{\theta-\st}(Q,d) = R(Q,d)^{\theta-\st}/PG_d$. The quotient map $R(Q,d)^{\theta-\st} \to M^{\theta-\st}(Q,d)$ is a $PG_d$-torsor in the \'{e}tale topology (see \cite[Proposition~0.9]{GIT:94}) with a smooth total space, so we conclude that $M^{\theta-\st}(Q,d)$ is smooth. The induced morphism $M^{\theta-\sst}(Q,d) \to M^{0-\sst}(Q,d) = R(Q,d)/\!\!/PG_d$ is projective. A result of Le Bruyn and Procesi \cite[Theorem~1]{LP:90} states that the ring of invariants is generated by traces along oriented cycles in $Q$. If $Q$ has no oriented cycles, in which case we call $Q$ acyclic, then $M^{\theta-\sst}(Q,d)$ is projective. If $R(Q,d)^{\theta-\sst} = R(Q,d)^{\theta-\st}$ we call $\theta$ generic for $d$ and we write $R(Q,d)^\theta$ for the (semi\nobreakdash-)stable locus and $M^\theta(Q,d)$ for the quotient.

Let $\VV_i$ be the (trivial) vector bundle on $R(Q,d)$ with fiber $V_i$ equipped with the $G_d$-lineari\-za\-tion given by
\begin{displaymath}
g(M,v) = (g\cdot M,g_iv).
\end{displaymath}
Note that $\Delta$ does not act trivially on the fibers and hence $\VV_i$ does not descend along the geometric quotient $R(Q,d)^{\theta-\st} \to M^{\theta-\st}(Q,d)$. However, the bundles $\VV_i^\vee \otimes \VV_j$ do.

\section{Torus actions on quiver moduli}

Fix a finite quiver $Q$ and a dimension vector $d$. Let $T = \mathbb{G}_m^{Q_1}$ act on $R(Q,d)$ as follows: an element $t = (t_\alpha)_{\alpha} \in T$ acts on $M = (M_\alpha)_\alpha \in R(Q,d)$ by $t. M = (t_\alpha M_\alpha)_\alpha$. As it commutes with the $PG_d$-action on $R(Q,d)$, the action of $T$ descends to an action on the geometric quotient $M^{\theta-\st}(Q,d)$.

The locus of fixed points $M^{\theta-\st}(Q,d)^T$ can be described in terms of the universal abelian covering quiver $\smash{\hat{Q}}$. The (infinite) quiver $\smash{\hat{Q}}$ is given by $\smash{\hat{Q}_0} = Q_0 \times \Z^{Q_1}$, $\smash{\hat{Q}_1} = Q_1 \times \Z^{Q_1}$ and for $\alpha \in Q_1$ and $\chi \in \Z^{Q_1}$ the source and target of the arrow $(\alpha,\chi)$ of $\smash{\hat{Q}}$ are
\begin{gather*}
s(\alpha,\chi)  = (s(\alpha),\chi),\qquad  t(\alpha,\chi)  = (t(\alpha),\chi+x_\alpha).
\end{gather*}
The character $x_\alpha \in X(T)$ in the right-hand expression above is defined as $x_\alpha(t) = t_\alpha$.

We say that a dimension vector $\beta \in \smash{\Lambda_+\big(\hat{Q}\big)}$ covers $d$ if $\smash{\sum_\chi} \beta_{i,\chi} = d_i$ for every vertex $i$. There is an action of $\Z^{Q_1}$ on $\smash{\Lambda_+\big(\hat{Q}\big)}$ given by $\xi \cdot \beta = (\beta_{i,\chi+\xi})_{(i,\chi)}$. Two dimension vectors in the same orbit will be called translates of one another. Translates clearly cover the same dimension vector of $Q$. We define a stability condition $\smash{\hat{\theta}}$ for $\hat{Q}$ by $\smash{\hat{\theta}}_{(i,\chi)} = \theta_i$. The following result is due to Weist:

\begin{thm}[{\cite[Theorem~3.8]{Weist:13}}] \label{t:weist}
The fixed point locus $M^{\theta-\st}(Q,d)^T$ decomposes as
\begin{displaymath}
M^{\theta-\st}(Q,d)^T = \bigsqcup_{\beta} X_\beta,
\end{displaymath}
a disjoint union of irreducible components $X_\beta$, where the union ranges over all $\beta \in \Lambda_+\big(\hat{Q}\big)$ up to translation which cover $d$. Each $X_\beta$ is isomorphic to
\begin{displaymath}
X_\beta \cong M^{\hat{\theta}-\st}\big(\hat{Q},\beta\big).
\end{displaymath}
\end{thm}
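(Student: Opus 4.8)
The plan is to study the $T$-fixed points of $M^{\theta-\st}(Q,d)$ by lifting the $T$-action through the $PG_d$-torsor $\pi\colon R(Q,d)^{\theta-\st}\to M^{\theta-\st}(Q,d)$. First I would show that $[M]$ is $T$-fixed if and only if $M$ underlies a graded representation. If $[M]$ is fixed then $t.M$ and $M$ lie in one $PG_d$-orbit for each $t$, and since $M$ is $\theta$-stable its stabilizer in $PG_d$ is trivial (Schur's lemma), so there is a unique $\phi(t)\in PG_d$ with $\phi(t)\cdot M=t.M$; as the $T$- and $PG_d$-actions on $R(Q,d)$ commute, $\phi\colon T\to PG_d$ is a homomorphism of algebraic groups. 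Since $T$ is a torus and $G_d\to PG_d$ is a central extension by $\G_m$, the corresponding sequence of cocharacter lattices splits, so $\phi$ lifts to $\tilde\phi\colon T\to G_d$, still with $\tilde\phi(t)\cdot M=t.M$ because $\Delta$ acts trivially on $R(Q,d)$. Writing each $V_i=\bigoplus_\chi V_{i,\chi}$ for the $\tilde\phi(T)$-weight decomposition and unravelling $\tilde\phi(t)\cdot M=t.M$, one finds that $M_\alpha$ sends $V_{i,\chi}$ into $V_{j,\chi+x_\alpha}$ for every arrow $\alpha\colon i\to j$; hence $M$ is the pushdown $F(N)$ of a representation $N$ of $\hat Q$ of dimension vector $\beta=(\dim V_{i,\chi})_{(i,\chi)}$, which covers $d$. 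The lift $\tilde\phi$ is unique up to twisting by a character of $T$, which replaces the grading, hence $\beta$ and $N$, by a translate, and a change of basis replaces $N$ by an isomorphic representation; conversely every such $N$ yields a $T$-fixed point $[F(N)]$ with the tautological grading action.

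The key point is the stability dictionary: $N$ is $\hat\theta$-stable if and only if $F(N)$ is $\theta$-stable. Because $\hat\theta_{(i,\chi)}=\theta_i$, a subrepresentation $N'\sub N$ pushes to $F(N')\sub F(N)$ with $\mu_{\hat\theta}(N')=\mu_\theta(F(N'))$, which gives the ``if'' direction immediately. For the converse I would argue by contradiction: if $F(N)$ is not $\theta$-stable, it has a proper nonzero destabilizing subrepresentation of some dimension vector $e$. The set $\Sigma_e$ of all subrepresentations of $F(N)$ of dimension vector $e$ is a closed subvariety of $\prod_i\Gr(e_i,V_i)$, hence projective and nonempty, and every member of $\Sigma_e$ is again destabilizing, since the slope depends only on $e$. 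Now $\tilde\phi(t)$ is an isomorphism $F(N)\to t.F(N)$, and $t.F(N)$ has, as the $t_\alpha$ are invertible, literally the same subrepresentations (as subspaces) as $F(N)$; hence $\tilde\phi(T)$ acts algebraically on $\Sigma_e$, and by the Borel fixed point theorem it has a fixed point --- a destabilizing subrepresentation that is $\tilde\phi(T)$-stable, hence graded, hence of the form $F(N')$ for a proper nonzero $N'\sub N$, contradicting $\hat\theta$-stability of $N$. I expect this step to be the main obstacle, exactly because $F(N)$ usually has non-graded subrepresentations; the projectivity of $\Sigma_e$ together with the fixed point theorem is what forces a graded destabilizer to exist.

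With the dictionary in hand I would assemble the geometric statement. Fix $\beta$ covering $d$ and view the linear pushdown $F$ as an isomorphism of $R(\hat Q,\beta)$ onto the closed subspace $R(Q,d)_\beta\sub R(Q,d)$ of $\beta$-graded representations, equivariant for the block-diagonal embedding $G_\beta\into G_d$; by the previous paragraph $F$ restricts to an isomorphism $R(\hat Q,\beta)^{\hat\theta-\st}\xrightarrow{\ \sim\ }R(Q,d)_\beta\cap R(Q,d)^{\theta-\st}$. By the first step $\pi^{-1}\big(M^{\theta-\st}(Q,d)^T\big)=\bigcup_\beta PG_d\cdot\big(R(Q,d)_\beta\cap R(Q,d)^{\theta-\st}\big)$, and an element of $PG_d$ carrying one $\beta$-graded stable representation to another must lie, modulo $\Delta$, in $G_\beta$ --- this uses the uniqueness of $\phi$ and the fact that $\beta$, having finite support and covering $d\neq 0$, is invariant under no nontrivial translation. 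Hence the quotient of the $\beta$-stratum by $PG_d$ is $\big(R(Q,d)_\beta\cap R(Q,d)^{\theta-\st}\big)/PG_\beta$, which via $F$ is exactly $M^{\hat\theta-\st}(\hat Q,\beta)$. Calling its image in $M^{\theta-\st}(Q,d)$ the locally closed subvariety $X_\beta$ --- which depends on $\beta$ only up to translation --- we obtain $X_\beta\cong M^{\hat\theta-\st}(\hat Q,\beta)$; in particular each $X_\beta$ is irreducible.

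Finally I would identify the $X_\beta$ as the connected components of $M^{\theta-\st}(Q,d)^T$. By the first step every $T$-fixed point lies in exactly one $X_\beta$, with $\beta$ determined up to translation, so the $X_\beta$ are pairwise disjoint and cover $M^{\theta-\st}(Q,d)^T$. Each is open in the fixed locus: over $M^{\theta-\st}(Q,d)^T$ the equivariant bundle $\VV_i^\vee\otimes\VV_j$ splits into $T$-weight subbundles, so the multiset of its $T$-weights is locally constant, and on $X_\beta$ this multiset is $\{\chi'-\chi\}$ with multiplicities $\beta_{i,\chi}\beta_{j,\chi'}$, which --- since $Q$ is connected, so the support of $\beta$ is a connected subquiver of $\hat Q$ --- recovers $\beta$ up to translation. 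As $M^{\theta-\st}(Q,d)$ is of finite type, its fixed locus has only finitely many connected components; since it is the fixed locus of a torus acting on the smooth variety $M^{\theta-\st}(Q,d)$, it is smooth, so its components are irreducible. A variety written as a disjoint union of finitely many nonempty open connected subsets is the disjoint union of its connected components, so $M^{\theta-\st}(Q,d)^T=\bigsqcup_\beta X_\beta$ with $X_\beta\cong M^{\hat\theta-\st}(\hat Q,\beta)$, as claimed.
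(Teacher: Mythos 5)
The paper itself offers no proof of this statement; it is imported verbatim from Weist \cite[Theorem~3.8]{Weist:13}, so your argument must be judged against Weist's. Your overall route is the standard one and most of it is sound: lifting the $T$-action through the $PG_d$-torsor to attach to each stable fixed point a homomorphism $\tilde\phi\colon T\to G_d$ whose weight decomposition exhibits $M$ as the pushdown of a representation of $\hat{Q}$, with the ambiguity of the lift accounting exactly for translation; the stability dictionary; and the orbit computation showing that the $\beta$-graded stable locus meets each $PG_d$-orbit in a single $PG_\beta$-orbit (correctly using that a nonzero finitely supported $\beta$ has no translation symmetry). For the hard direction of the stability dictionary you run Borel's fixed point theorem on the projective quiver Grassmannian $\Gr_e(F(N))$ to produce a graded destabilizer, where Weist instead exploits the uniqueness of the maximal destabilizing subrepresentation (which is therefore $\tilde\phi(T)$-invariant, hence graded); both work.

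There is, however, a genuine gap in the final step, where you claim each $X_\beta$ is open in the fixed locus because the $T$-weight multiset of $\VV_i^\vee\otimes\VV_j$ is locally constant and ``recovers $\beta$ up to translation.'' First, a small slip: connectedness of $\supp\beta$ does not follow from connectedness of $Q$ but from the existence of a stable (hence indecomposable) representation of dimension vector $\beta$. Second, and more seriously, the descended bundles only give you the difference multisets $A_j-A_i$ of the weight multisets $A_i$ of the $\VV_i$, and difference multisets do not determine the $A_i$ up to simultaneous translation: homometric multisets exist even with connected interval support, e.g.\ $\{0,1,2,3,3,4\}$ and $\{0,1,1,2,3,4\}$ have identical difference multisets but are not translates. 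So your invariant need not separate distinct translation classes, and openness of the individual $X_\beta$ does not follow as stated. The standard repair is to use the finer invariant you already built: over a connected component of the fixed locus, the torsor together with the lifted $T$-action yields a homomorphism from $T$ into the associated inner form of $PG_d$, and rigidity of homomorphisms of diagonalizable groups makes its fibrewise conjugacy class---which is exactly the multiset of weights of $\tilde\phi$ on each $V_i$, i.e.\ $\beta$ up to translation---locally constant. With that substitution (and a word on why the bijective morphism $M^{\hat{\theta}-\st}\big(\hat{Q},\beta\big)\to X_\beta$ is an isomorphism of varieties, which requires normality of the target or a tangent-space argument rather than mere bijectivity), the proof closes.
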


\section{Torus-equivariant tautological relations}

Again let $Q$ be a finite quiver, let $d$ be a dimension vector for $Q$ and let $\theta$ be a stability condition. We compute a presentation for the rational equivariant Chow ring $A_T^*\big(M^{\theta-\st}(Q,d)\big)_\Q$. For the definition and general results on equivariant Chow rings we refer to~\cite{EG:98}.
We will work with Chow rings with rational coefficients throughout.

The $T$-equivariant Chow ring $A_T^*\big(M^{\theta-\st}(Q,d)\big)_\Q$ coincides with the $PG_d \times T$-equivariant Chow ring $A_{PG_d \times T}^*(R(Q,d)^{\theta-\st})_\Q$ by \cite[Lemma~2.1]{MRV:06}. This is not far away from the $G_d \times T$-equivariant Chow ring. So we would first like to compute the rings
\begin{gather*}
A_{G_d \times T}^*\big(R(Q,d)^{\theta-\st}\big)_\Q,\qquad A_{G_d \times T}^*\big(R(Q,d)^{\theta-\sst}\big)_\Q.
\end{gather*}
We will derive a presentation of these rings.

Choose a basis for each of the $V_i$. Let $T_d$ be the maximal torus of $G_d$ of diagonal matrices. Let us recap the definitions of the groups that we are using:
\begin{gather*}
G_d = \prod_{i \in Q_0} \GL(V_i) \cong \prod_{i \in Q_0} \GL_{d_i}(k), \\
T_d \sub G_d \text{ maximal torus of invertible diagonal matrices}, \\
\Delta = \{ (z\id_{V_i})_i \,|\,  z \in k^\times \} \sub G_d, \\
PG_d = G_d/\Delta, \\
PT_d = T_d/\Delta, \\
T = \G_m^{Q_1}.
\end{gather*}

Let $\xi_{i,r} \in X(T_d)$ be the character that selects in the matrix corresponding to the vertex $i$ the $r$\textsuperscript{th} diagonal entry. Recall that $x_\alpha \in X(T)$ is the character that selects the entry which corresponds to $\alpha$. For a character $\chi \in X(T_d \times T) = X(T_d) \oplus X(T)$ let $L(\chi)$ be the trivial line bundle on $R(Q,d)$ equipped with the $T_d \times T$-lineariz\-ation induced by $\chi$. So $\smash{c_1^{T_d \times T}(L(\chi))} = \chi$.
As $G_d \times T$ acts linearly on the vector space $R(Q,d)$, we obtain
\begin{gather*}
A_{G_d \times T}^*(R(Q,d))_\Q  \cong A_{G_d \times T}^*(\pt)_\Q
 \cong \bigg(\bigotimes_{i \in Q_0} \Q[\xi_{i,1},\dots,\xi_{i,d_i}]^{S_{d_i}}\bigg) \otimes_\Q \Q[x_\alpha]_{\alpha \in Q_1} \\
\hphantom{A_{G_d \times T}^*(R(Q,d))_\Q}{} = \bigg(\bigotimes_{i \in Q_0} \Q[x_{i,1},\dots,x_{i,d_i}]\bigg) \otimes_\Q \Q[x_\alpha]_{\alpha \in Q_1}.
\end{gather*}
In the above equation $x_{i,r}$ is the $r$\textsuperscript{th} elementary symmetric function in the variables $\xi_{i,1},\dots,\xi_{i,d_i}$. Note that $x_{i,r} = c_r^{G_d \times T}(\VV_i)$.
Let $j_1\colon R(Q,d)^{\theta-\sst} \to R(Q,d)$ and $j_2\colon R(Q,d)^{\theta-\st} \to R(Q,d)$ be the open embeddings.
The pull-backs of $j_1$ and $j_2$ induce surjective homomorphisms of graded rings
\begin{displaymath}
\begin{tikzcd}
A_{G_d \times T}^*(R(Q,d))_\Q \arrow{r}{j_1^*} \arrow{rd}{j_2^*}& A_{G_d \times T}^*\big(R(Q,d)^{\theta-\sst}\big)_\Q \arrow{d}{} \\
& A_{G_d \times T}^*\big(R(Q,d)^{\theta-\st}\big)_\Q.
\end{tikzcd}
\end{displaymath}
Let $i_1$ and $i_2$ be the closed embeddings of $R(Q,d) - R(Q,d)^{\theta-\sst}$ and $R(Q,d) - R(Q,d)^{\theta-\st}$, respectively.
The push-forwards of $i_1$ and $i_2$ give surjections onto the kernels of $j_1^*$ resp.\ $j_2^*$. The same arguments as in \cite[Theorems~8.1 and~9.1]{FR:18:Sst_ChowHa} show that the images of $i_1^*$ and $i_2^*$ can be re-written in terms of the $T$-equivariant CoHA multiplication. More precisely, we consider the correspondence diagram
\begin{displaymath}
R(Q,d') \times R(Q,d'') \ot \begin{pmatrix} R(Q,d') & * \\ 0 & R(Q,d'') \end{pmatrix} \to R(Q,d),
\end{displaymath}
where $d'$ and $d''$ are dimension vectors of $Q$ such that $d'+d''=d$.
The left-hand map is the projection and the right-hand map is the inclusion as a linear subspace. These maps are equivariant with respect to
\begin{displaymath}
G_{d'} \times G_{d''} \times T \ot \begin{pmatrix} G_{d'} & * \\ 0 & G_{d''} \end{pmatrix} \times T \to G_d \times T.
\end{displaymath}
Passing to equivariant Chow groups we obtain a map
\begin{displaymath}
A_{G_{d'} \times T}^*(R(Q,d'))_\Q \otimes_\Q A_{G_{d''} \times T}^*(R(Q,d''))_\Q \to A_{G_d \times T}^*(R(Q,d))_\Q,
\end{displaymath}
which sends $f \otimes g$ to $f * g$ which is given by
\begin{displaymath}
f*g = \sum_\pi f(\xi_{i,\pi^{i}(r)},x_\alpha)_{r=1,\dots,d'_i}\cdot g(\xi_{i,\pi^{i}(r)},x_\alpha)_{r=d'_i+1,\dots,d_i}\cdot\frac{\Delta_1(\xi_{i,\pi^{i}(r)},x_\alpha)}{\Delta_0(\xi_{i,\pi^{i}(r)})}.
\end{displaymath}
In the above formula $\pi = (\pi^{i})_{i \in Q_0}$ ranges over all elements of $\prod_{i \in Q_0}S_{d_i}$ for which each $\pi^{i}$ is a~$(d'_i,d''_i)$-shuffle permutation and
\begin{gather*}
\Delta_0  = \prod_{i \in Q_0} \prod_{r=1}^{d'_i} \prod_{s=d'_i+1}^{d_i} (\xi_{i,s}-\xi_{i,r}), \\
\Delta_1  = \prod_{\alpha:i \to j} \prod_{r=1}^{d'_i} \prod_{s=d'_j+1}^{d_j} (\xi_{j,s}-\xi_{i,r} + x_\alpha).
\end{gather*}
Recall that being a $(d'_i,d''_i)$-shuffle permutation means $\pi^i(1) < \dots < \pi^i(d'_i)$ and $\pi^i(d'_i+1) < \dots < \pi^i(d_i)$.
The contribution of $x_\alpha$ in the expression for $\Delta_1$ can be explained as follows: inside $\smash{\big(\begin{smallmatrix} R(Q,d') & * \\ 0 & R(Q,d'') \end{smallmatrix}\big)}$, the subset $R(Q,d') \times R(Q,d'')$ is the zero locus of a section of a $T_d \times T$-equivariant vector bundle. The bundle is isomorphic to
\begin{displaymath}
\bigoplus_{\alpha\colon i \to j} \bigoplus_{r=1}^{d'_i} \bigoplus_{s=d'_j+1}^{d_j} L(\xi_{i,r})^\vee \otimes L(\xi_{j,s}) \otimes L(x_\alpha).
\end{displaymath}

\begin{thm} \label{t:taut}The kernels of the surjections
\begin{displaymath}
\begin{tikzcd}
\big(\!\bigotimes_{i \in Q_0}\! \Q[\xi_{i,1},\dots,\xi_{i,d_i}]^{S_{d_i}}\!\big)\! \otimes_\Q \! \Q[x_\alpha]_{\alpha \in Q_1}\! = &[-3em] A_{G_d \times T}^*(R(Q,d))_\Q\! \arrow{r}{j_1^*} \arrow{rd}{j_2^*} & \!A_{G_d \times T}^*\!\big(R(Q,d)^{\theta-\sst}\big)_\Q \arrow{d}{} \\
&& A_{G_d \times T}^*\!\big(R(Q,d)^{\theta-\st}\big)_\Q
\end{tikzcd}
\end{displaymath}
are the $\Q$-linear subspaces
\begin{gather*}
\ker(j_1^*)  = \sum_{\substack{d',d'' \in \smash{\Z_{\geq 0}^{Q_0}}\\ d'+d''=d\\ \mu(d') > \mu(d'')}} \big\{ f * g \,|\,  f \in A_{G_{d'} \times T}^*(R(Q,d'))_\Q, g \in A_{G_{d''} \times T}^*(R(Q,d''))_\Q \big\}, \\
\ker(j_2^*)  = \sum_{\substack{d',d'' \in \smash{\Z_{\geq 0}^{Q_0}}\\ d'+d''=d\\ \mu(d') \geq \mu(d'')}} \big\{ f * g \,|\,  f \in A_{G_{d'} \times T}^*(R(Q,d'))_\Q, g \in A_{G_{d''} \times T}^*(R(Q,d''))_\Q \big\}.
\end{gather*}
\end{thm}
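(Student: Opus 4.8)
The plan is to identify $\ker(j_1^*)$ and $\ker(j_2^*)$ with the images of the push-forwards $i_{1*}$ and $i_{2*}$ — recorded above, this is the equivariant localization exact sequence of \cite{EG:98} — and to determine these images by running the argument of \cite[Theorems~8.1 and~9.1]{FR:18:Sst_ChowHa} in the presence of the commuting torus $T$. Note first that $T$ rescales the maps $M_\alpha$, hence fixes every subrepresentation of every $M\in R(Q,d)$; therefore it preserves slopes, $\theta$-semistability, $\theta$-stability, and Harder--Narasimhan type, so all of the $G_d$-invariant loci and stratifications used below are automatically $T$-invariant. It then remains to prove two inclusions. For ``$\supseteq$'', fix $d'+d''=d$ with $d',d''\neq0$. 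The correspondence that realizes $f*g$ as a push-forward, namely the inclusion of $\smash{\big(\begin{smallmatrix} R(Q,d') & * \\ 0 & R(Q,d'')\end{smallmatrix}\big)}$ into $R(Q,d)$ followed by inducing up from the parabolic $P_{d'}\sub G_d$, is proper, with image the closed $G_d$-stable set of representations admitting a subrepresentation of dimension vector $d'$. If $\mu(d')>\mu(d'')$, equivalently $\mu(d')>\mu(d)$, every such representation is $\theta$-unstable, so this image lies in $R(Q,d)\setminus R(Q,d)^{\theta-\sst}$ and $j_1^*(f*g)=0$; if instead only $\mu(d')\geq\mu(d)$, every such representation fails to be $\theta$-stable, so $j_2^*(f*g)=0$. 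This proves the inclusions ``$\supseteq$''.

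For ``$\subseteq$'' I would stratify the unstable locus by Harder--Narasimhan type, $R(Q,d)\setminus R(Q,d)^{\theta-\sst}=\bigsqcup_{d^\bullet}S_{d^\bullet}$, where $d^\bullet=(d^{(1)},\dots,d^{(s)})$ ranges over the types with $\mu(d^{(1)})>\dots>\mu(d^{(s)})$ and $s\geq2$. By the standard description of these strata, $S_{d^\bullet}$ is $(G_d\times T)$-equivariantly a bundle over the partial flag variety $G_d/P_{d^\bullet}$ whose fibre is an affine bundle over $\prod_k R(Q,d^{(k)})^{\theta-\sst}$; writing $E_{d^\bullet}=G_d\times^{P_{d^\bullet}}R_{P_{d^\bullet}}$ for the analogous bundle whose fibre is the full space of representations preserving the fixed flag of type $d^\bullet$, the stratum $S_{d^\bullet}$ is the image of an open subset of $E_{d^\bullet}$ under the proper forgetful map $q_{d^\bullet}\colon E_{d^\bullet}\to R(Q,d)$, and — because the Harder--Narasimhan filtration is unique — $q_{d^\bullet}$ restricts to an isomorphism of that open subset onto $S_{d^\bullet}$ with $q_{d^\bullet}^{-1}(S_{d^\bullet})$ equal to it. Discarding the empty strata, order the remaining finitely many types with non-decreasing stratum dimension; then each $Z^{(m)}:=S_{d^\bullet_1}\cup\dots\cup S_{d^\bullet_m}$ is closed in $R(Q,d)$ and $q_{d^\bullet_m}(E_{d^\bullet_m})\sub Z^{(m)}$.

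Then an induction on $m$ shows that the image of $A_*^{G_d\times T}(Z^{(m)})$ in $A_*^{G_d\times T}(R(Q,d))$ equals $\sum_{k\leq m}\im\big((q_{d^\bullet_k})_*\big)$: given a class supported on $Z^{(m)}$, lift its restriction to the open stratum $S_{d^\bullet_m}$ to a class on $E_{d^\bullet_m}$, subtract the corresponding $q_{d^\bullet_m}$-push-forward, and observe — using $q_{d^\bullet_m}^{-1}(S_{d^\bullet_m})=S_{d^\bullet_m}$ — that the difference restricts to zero on $R(Q,d)\setminus Z^{(m-1)}$ and is therefore handled by the inductive hypothesis. Hence $\ker(j_1^*)=\sum_{d^\bullet}\im\big((q_{d^\bullet})_*\big)$. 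By affine-bundle invariance and the associated-bundle isomorphism $A_*^{G_d\times T}(G_d\times^{P_{d^\bullet}}Y)\cong A_*^{L_{d^\bullet}\times T}(Y)$ with $L_{d^\bullet}=\prod_k G_{d^{(k)}}$, the group $A_*^{G_d\times T}(E_{d^\bullet})$ is spanned by products $f_1\cdots f_s$ of classes pulled back from the factors $R(Q,d^{(k)})$, and — chasing the self-intersection formula through the bundle — the push-forward along $q_{d^\bullet}$ sends such a product to the iterated $T$-equivariant CoHA product $f_1*\dots*f_s$, the shuffle sum and the factor $\Delta_1/\Delta_0$ appearing exactly as in the two-step correspondence recalled before the statement. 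Thus $\ker(j_1^*)$ is the $\Q$-span of all iterated CoHA products. Finally, since $E_{(d^{(1)},\dots,d^{(s)})}$ maps $(G_d\times T)$-equivariantly and properly onto $E_{(d^{(1)},\,d-d^{(1)})}$ — equivalently, by associativity of the CoHA product — every such iterated product lies in the span of the two-step products $f*g$ with $d'=d^{(1)}$ and $d''=d-d^{(1)}$; and $\mu(d')>\mu(d'')$ because $\mu(d^{(1)})$ exceeds each of $\mu(d^{(2)}),\dots,\mu(d^{(s)})$ while $\mu(d'')$ is a weighted average of those. This proves ``$\subseteq$'' for $j_1^*$. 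The case of $j_2^*$ is identical once the stratification is enlarged by the strata of the strictly semistable locus $R(Q,d)^{\theta-\sst}\setminus R(Q,d)^{\theta-\st}$ indexed by the dimension types of filtrations whose subquotients are $\theta$-stable of slope $\mu(d)$; these are again affine bundles over products $\prod_k R(Q,d^{(k)})^{\theta-\sst}$ with $\mu(d^{(1)})\geq\dots\geq\mu(d^{(s)})$, all inequalities now being equalities, and the same reduction yields two-step products $f*g$ with $\mu(d')\geq\mu(d'')$.

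The main obstacle is bookkeeping rather than new ideas: the whole proof is that of \cite{FR:18:Sst_ChowHa}, and the work is to verify that each ingredient — the Harder--Narasimhan stratification, the presentation $S_{d^\bullet}\cong G_d\times^{P_{d^\bullet}}(\text{affine bundle over }\prod_k R(Q,d^{(k)})^{\theta-\sst})$, the identity $q_{d^\bullet}^{-1}(S_{d^\bullet})=S_{d^\bullet}$, and the self-intersection and flag-bundle integration formulas — survives the extra torus factor, which it does because $T$ commutes with $G_d$ and with every morphism in sight, and to check that the torus weights $x_\alpha$ enter $\Delta_1$ precisely through the line bundles $L(x_\alpha)$, i.e.\ via the bundle $\bigoplus_{\alpha\colon i\to j}\bigoplus_{r=1}^{d'_i}\bigoplus_{s=d'_j+1}^{d_j}L(\xi_{i,r})^\vee\otimes L(\xi_{j,s})\otimes L(x_\alpha)$. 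The technically heaviest single step, taken verbatim from the non-equivariant case, is the self-intersection computation identifying the push-forward along a Harder--Narasimhan stratum with the CoHA product.
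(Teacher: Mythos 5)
Your proposal is correct and follows exactly the route the paper intends: the paper's proof of Theorem~\ref{t:taut} simply defers to a ``suitable adaption of the arguments in \cite[Theorems~8.1 and~9.1]{FR:18:Sst_ChowHa}'', and what you have written out -- the Harder--Narasimhan (resp.\ Jordan--H\"older-type) stratification, the identification of the push-forward along each stratum with the $T$-equivariant CoHA product, the reduction of iterated products to two-step products via associativity, and the observation that $T$ commutes with $G_d$ and preserves all the relevant loci -- is precisely that adaptation. No substantive difference in approach.
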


\begin{proof}The statement of the theorem follows from a suitable adaption of the arguments in \cite[Theorems~8.1 and~9.1]{FR:18:Sst_ChowHa}.
\end{proof}

Now to the $T$-equivariant Chow ring of $M^{\theta-\st}(Q,d)$. The maximal torus $T_d$ of $G_d$ con\-tains~$\Delta$. The quotient $PT_d := T_d/\Delta$ is a maximal torus of $PG_d$. Its character lattice is given by
\begin{displaymath}
X(PT_d) = \left\{ \eta = \sum_{i \in Q_0} \sum_{r=1}^{d_i} b_{i,r} \xi_{i,r} \,|\,  \sum_{i \in Q_0} \sum_{r=1}^{d_i} b_{i,r} = 0 \right\} \sub X(T_d).
\end{displaymath}
The Weyl group of $PT_d$ inside $PG_d$ is also $W_d := \prod_{i \in Q_0} S_{d_i}$. Therefore
\begin{displaymath}
A_{PG_d \times T}^*(R(Q,d))_\Q \cong S(X(PT_d))_\Q^{W_d} \otimes_\Q \Q[x_\alpha]_{\alpha \in Q_1}
\end{displaymath}
by \cite[Proposition~6]{EG:98}.
The ring homomorphism $\smash{A_{PG_d \times T}^*}(R(Q,d))_\Q \to \smash{A_{G_d \times T}^*}(R(Q,d))_\Q$ which comes from $G_d \to PG_d$ corresponds to the homomorphism that is induced by the inclusion $X(PT_d) \into X(T_d)$. We can give generators for $\smash{S(X(PT_d))_\Q^{W_d}}$. Fix an order on $Q_0$, say $Q_0 = \{1,\dots,n\}$. Then consider the lattice $Z_d = \smash{\bigoplus_{1 \leq i < j \leq n}} \Z^{d_i} \otimes \Z^{d_j}$. Let $\smash{\zeta_{r,s}^{i,j}}$ be the pure tensor of unit vectors $e_r \otimes e_s$ embedded into the $(i,j)$\textsuperscript{th} direct summand of $Z_d$. On $Z_d$ there is an action of $W_d$ in the obvious way. The map $Z_d \to X(PT_d)$ which sends $\smash{\zeta_{r,t}^{i,j}}$ to $\xi_{i,r} - \xi_{j,t}$ is well-defined, $W_d$-equivariant, and surjective. It hence gives rise to a surjective homomorphim $f\colon \smash{S(Z_d)_\Q^{W_d}} \to \smash{S(X(PT_d))_\Q^{W_d}}$. The ring $S(Z_d)_\Q^{W_d}$ is generated by the algebraically independent elements
\begin{displaymath}
z_{k,l}^{i,j} := \sum_{\substack{1 \leq r_1 < \dots < r_k \leq d_i \\ 1 \leq t_1 < \dots < t_l \leq d_j}} \prod_{\nu=1}^k \prod_{\mu=1}^l \zeta_{r_\nu,t_\mu}.
\end{displaymath}

\begin{rem}Suppose that $k = \C$. The same arguments as in \cite[Theorem~5.1]{FR:18:Sst_ChowHa} show that the $G_d \times T$-equivariant and also the $PG_d \times T$-equivariant cohomology of $R(Q,d)^{\theta-\sst}$ with rational coefficients vanishes in odd degrees and that the cycle maps $\smash{A_{G_d \times T}^i}\big(R(Q,d)^{\theta-\sst}\big)_\Q \to \smash{H_{G_d \times T}^{2i}}\big(R(Q,d)^{\theta-\sst};\Q\big)$ and $\smash{A_{PG_d \times T}^i}\big(R(Q,d)^{\theta-\sst}\big)_\Q \to \smash{H_{PG_d \times T}^{2i}}\big(R(Q,d)^{\theta-\sst};\Q\big)$ are isomorphisms. However, if semi-stability and stability do not agree then the equivariant cohomology groups of $R(Q,d)^{\theta-\st}$ will in general not vanish in odd degrees and the cycle maps will not be isomorphisms.
\end{rem}

\section{Localization at torus fixed points}

In this section we compute the pull-back $\iota^*\colon A_T^*\big(M^{\theta-\st}(Q,d)\big)_\Q \to A_T^*\big(M^{\theta-\st}(Q,d)^T\big)_\Q$ along the inclusion $\iota\colon M^{\theta-\st}(Q,d)^T \to M^{\theta-\st}(Q,d)$.
By Weist's result the locus of torus fixed points $M^\theta(Q,d)$ decomposes into components which are isomorphic to $\smash{M^{\hat{\theta}}\big(\hat{Q},\beta\big)}$. It therefore suffices to compute the pull-back
\begin{displaymath}
\iota_\beta^*\colon \ A_T^*\big(M^{\theta-\st}(Q,d)\big)_\Q \to A_T^*\big(M^{\hat{\theta}-\st}\big(\hat{Q},\beta\big)\big)_\Q
\end{displaymath}
along the regular closed immersion
\begin{displaymath}
\iota_\beta\colon \ M^{\hat{\theta}}\big(\hat{Q},\beta\big) \to M^\theta(Q,d).
\end{displaymath}
Fix for every $i \in Q_0$ a decomposition $V_i = \bigoplus_{\chi \in X(T)} V_{i,\chi}$ into subspaces of dimension $\dim V_{i,\chi} = \beta_{i,\chi}$. This amounts to embedding $G_\beta$ as a Levi subgroup of $G_d$. The immersion $\iota_\beta$ is provided by the map
\begin{displaymath}
\tilde{\iota}_{\beta}\colon \ R\big(\hat{Q},\beta\big) \to R(Q,d)
\end{displaymath}
which sends a representation $N \in R\big(\hat{Q},\beta\big)$ to $M = \tilde{\iota}_\beta(N) \in R(Q,d)$ where $M_\alpha\colon V_i \to V_j$ is defined by
\begin{displaymath}
M_\alpha\bigg(\sum_\chi v_\chi\bigg) = \sum_\chi N_{\alpha,\chi}(v_\chi)
\end{displaymath}
for $v_{i,\chi} \in V_{i,\chi}$. The map $\tilde{\iota}_\beta$ is $G_\beta \times T$-equivariant with respect to the $G_\beta \times T$-action $a_1$ on $R(\hat{Q},\beta)$ which is defined by
\begin{displaymath}
a_1\colon \ (G_\beta \times T) \times R\big(\hat{Q},\beta\big) \to R\big(\hat{Q},\beta\big),\qquad ((g,t), N) \mapsto \big(t_\alpha g_{j,\chi+x_\alpha}N_{\alpha,\chi}g_{i,\chi}^{-1}\big)_{\alpha,\chi}.
\end{displaymath}
With respect to this action, $R\big(\hat{Q},\beta\big)^{\hat{\theta}-\st}$ is invariant and the action descends to an action of $PG_\beta \times T$.
To compute the pull-back of $\tilde{\iota}_\beta$ in equivariant intersection theory we choose a basis $e_{i,\chi,1},\dots,e_{i,\chi,\beta_{i,\chi}}$ of $V_{i,\chi}$ and a bijection
\begin{align*}
\phi_i^{(\beta)}\colon \ \{1,\dots,d_i\} &\to \{ (\chi,s) \,|\,  \chi \in X(T),\ s \in \{1,\dots,\beta_{i,\chi} \} \}, \\
r &\mapsto \big(\chi^{(\beta)}_{i,r},s^{(\beta)}_{i,r}\big).
\end{align*}
For convenience, we are going to neglect the dependency on $\beta$ in the notation whenever possible. Let $e_{i,r} := e_{i,\chi_{i,r},s_{i,r}}$. Then $e_{i,1},\dots,e_{i,d_i}$ is a basis of~$V_i$.
Consider the maximal torus $T_d$ of $G_d$ of diagonal matrices with respect to that basis; it is contained in the Levi subgroup~$G_\beta$. Its character lattice is
\begin{gather*}
\bigoplus_{i \in Q_0} \bigoplus_{r=1}^{d_i} \Z\xi_{i,r} = \bigoplus_{i \in Q_0} \bigoplus_{\chi \in X(T)} \bigoplus_{s=1}^{\beta_{i,\chi}} \Z\xi_{i,\chi,s},
\end{gather*}
where $\xi_{i,r} := \xi_{i,\chi_{i,r},s_{i,r}}$. Now
\begin{gather*}
A_{G_d \times T}^*(R(Q,d))_\Q  = \bigg(\bigotimes_{i \in Q_0} \Q[\xi_{i,1},\dots,\xi_{i,d_i}]^{S_{d_i}} \bigg) \otimes_\Q \Q[x_\alpha]_{\alpha \in Q_1} \\
\hphantom{A_{G_d \times T}^*(R(Q,d))_\Q}{}
= \bigg(\bigotimes_{i \in Q_0} \Q[x_{i,1},\dots,x_{i,d_i}] \bigg) \otimes_\Q \Q[x_\alpha]_{\alpha \in Q_1}, \\
A_{G_\beta \times T}^*(R(\hat{Q},\beta)) _\Q  = \bigg( \bigotimes_{i \in Q_0} \bigotimes_{\chi \in X(T)} \Q[\xi_{i,\chi,1},\dots,\xi_{i,\chi,\beta_{i,\chi}}]^{S_{\beta_{i,\chi}}} \bigg)\otimes_\Q \Q[x_\alpha]_{\alpha \in Q_1} \\
\hphantom{A_{G_\beta \times T}^*(R(\hat{Q},\beta)) _\Q}{}
= \bigg( \bigotimes_{i \in Q_0} \bigotimes_{\chi \in X(T)} \Q[x_{i,\chi,1},\dots,x_{i,\chi,\beta_{i,\chi}}] \bigg)\otimes_\Q \Q[x_\alpha]_{\alpha \in Q_1},
\end{gather*}
where $x_{i,r} = e_r(\xi_{i,1},\dots,\xi_{i,d_i})$ and $x_{i,\chi,s} = e_s(\xi_{i,\chi,1},\dots,\xi_{i,\chi,\beta_{i,\chi}})$. The map $\tilde{\iota}_\beta^*$ is given by $\tilde{\iota}_\beta^*(x_\alpha) = x_\alpha$ and by $\tilde{\iota}_\beta^*(\xi_{i,r}) = \xi_{i,\chi_{i,r},s_{i,r}}$. This implies
\begin{displaymath}
\tilde{\iota}_\beta^*(x_{i,r}) = e_r(\xi_{i,\chi_1,s_1},\dots,\xi_{i,\chi_{d_i},s_{d_i}}).
\end{displaymath}
In principle the image of $x_{i,r}$ under $\smash{\tilde{\iota}_\beta^*}$ can also be expressed in terms of the $\smash{x_{i,\chi,s}}$'s, like $\smash{\tilde{\iota}_\beta^*(x_{i,1})} = \smash{\sum_\chi x_{i,\chi,1}}$ and $\smash{\tilde{\iota}_\beta^*(x_{i,d_i})} = \smash{\prod_\chi x_{i,\chi,\beta_{i,\chi}}}$, but the intermediate terms are more complicated.

We obtain the following commutative diagram:
\begin{displaymath}
\begin{tikzcd}[column sep=small]
&[-2em] A_T^*\big(M^{\theta-\st}(Q,d)\big)_\Q \arrow{rr}{\iota_\beta^*} \arrow{d}{\cong} &[-2em] &[-2em]
A_T^*\big(M^{\hat{\theta}-\st}(\hat{Q},\beta)\big)_\Q \arrow{d}{\cong} \\
& A_{PG_d \times T}^*\big(R(Q,d)^{\theta-\st}\big)_\Q \arrow{rr}{} \arrow{dl}{} \arrow[twoheadleftarrow]{dd}{} & &
A_{PG_\beta \times T}^*\big(R(\hat{Q},\beta)^{\hat{\theta}-\st}\big)_\Q \arrow{dl}{} \\
A_{G_d \times T}^*\big(R(Q,d)^{\theta-\st}\big)_\Q \arrow[crossing over]{rr}{} & &
A_{G_\beta \times T}^*\big(R(\hat{Q},\beta)^{\hat{\theta}-\st}\big)_\Q & \\
& A_{PG_d \times T}^*(R(Q,d))_\Q \arrow[pos=.25]{rr}{\tilde{\iota}_\beta^*} \arrow[hook]{dl}{} & &
A_{PG_\beta \times T}^*(R(\hat{Q},\beta))_\Q \arrow[hook]{dl}{} \arrow[two heads]{uu}{j_\beta^*} \\
A_{G_d \times T}^*(R(Q,d))_\Q \arrow{rr}{\tilde{\iota}_\beta^*} \arrow[two heads]{uu}{} & &
A_{G_\beta \times T}^*(R(\hat{Q},\beta))_\Q \arrow[two heads,crossing over,pos=.25]{uu}{j_\beta^*} &
\end{tikzcd}
\end{displaymath}
where $j_\beta\colon R\big(\hat{Q},\beta\big)^{\hat{\theta}-\st} \to R\big(\hat{Q},\beta\big)$ is the open embedding of the stable locus. In the above diagram the action of $G_\beta \times T$ and $PG_\beta \times T$ is by~$a_1$.
As $T$ acts trivially on $\smash{M^{\hat{\theta}-\st}\big(\hat{Q},\beta\big)}$, we may identify
\begin{gather*}
A_T^*\big(M^{\hat{\theta}-\st}(\hat{Q},\beta)\big)_\Q \cong A^*\big(M^{\hat{\theta}-\st}\big(\hat{Q},\beta\big)\big)_\Q \otimes_\Q S(X(T))_\Q \\
\hphantom{A_T^*\big(M^{\hat{\theta}-\st}(\hat{Q},\beta)\big)_\Q}{} \cong A_{PG_\beta}^*\big(R\big(\hat{Q},\beta\big)^{\hat{\theta}-\st}\big)_\Q \otimes_\Q S(X(T))_\Q.
\end{gather*}
We introduce another action $a_2$ of $G_\beta \times T$ on $\smash{R\big(\hat{Q},\beta\big)}$. Define
\begin{displaymath}
a_2\colon \ (G_\beta \times T) \times R\big(\hat{Q},\beta\big) \to R\big(\hat{Q},\beta\big),\qquad ((g,t), N) \mapsto \big(g_{j,\chi+x_\alpha}N_{\alpha,\chi}g_{i,\chi}^{-1}\big)_{\alpha,\chi}.
\end{displaymath}
The action $a_2$ leaves the $\smash{\hat{\theta}}$-stable locus invariant and descends to an action of $PG_\beta \times T$. With respect to the action $a_2$ we obtain
\begin{displaymath}
A_{PG_\beta}^*\big(R\big(\hat{Q},\beta\big)^{\hat{\theta}-\st}\big)_\Q \otimes_\Q S(X(T))_\Q \cong A_{PG_\beta \times T}^*\big(R\big(\hat{Q},\beta\big)^{\hat{\theta}-\st}\big)_\Q.
\end{displaymath}

We are going to provide an isomorphism between the equivariant Chow rings with respect to the actions $a_1$ and $a_2$.
To this end we consider the automorphism of the group $G_\beta \times T$ given by
\begin{displaymath}
\Phi\colon \ G_\beta \times T \to G_\beta \times T,\qquad (g,t) \mapsto ((\chi(t)g_{i,\chi})_{i,\chi},t).
\end{displaymath}
Then we get a commutative diagram
\begin{displaymath}
\begin{tikzcd}
(G_\beta \times T) \times R\big(\hat{Q},\beta\big) \arrow{r}{\Phi \times \id} \arrow{d}{a_1} & (G_\beta \times T) \times R\big(\hat{Q},\beta\big) \arrow{d}{a_2} \\
R\big(\hat{Q},\beta\big) \arrow{r}{\id} & R\big(\hat{Q},\beta\big).
\end{tikzcd}
\end{displaymath}
This yields an isomorphism of quotient stacks $\phi\colon  \big[R\big(\hat{Q},\beta\big)/(G_\beta \times T)\big]_{\text{via } a_1} \to \big[R\big(\hat{Q},\beta\big)/(G_\beta \times T)\big]_{\text{via }a_2}$. We regard the equivariant Chow ring as the Chow ring of the corresponding quotient stack. This is justified by arguments by Edidin and Graham \cite[Section~5.3]{EG:98} and by Kresch \cite{Kresch:99}. The pull-back $\phi^*$ in intersection theory sends $\phi^*(x_\alpha) = x_\alpha$ and $\phi^*(\xi_{i,\chi,s}) = \xi_{i,\chi,s}+\chi$. We get another commutative diagram:{\footnotesize
\begin{displaymath}
\begin{tikzcd}[column sep=small]
&[-5em] A_T^*\big(M^{\hat{\theta}-\st}\big(\hat{Q},\beta\big)\big)_\Q \arrow{rrrr}{\cong} \arrow{d}{\cong} &[-6em] &[-5em]
&[-6em] &[-5em]
A^*\big(M^{\hat{\theta}-\st}\big(\hat{Q},\beta\big)\big)_\Q \otimes_\Q S \arrow{d}{\cong} \\[1em]
& A_{PG_\beta \times T}^*\big(R\big(\hat{Q},\beta\big)^{\hat{\theta}-\st}\big)_\Q \arrow{rr}{\cong} \arrow{dl}{} \arrow[twoheadleftarrow,pos=.75]{dd}{j_\beta^*} & &
A_{PG_\beta \times T}^*\big(R\big(\hat{Q},\beta\big)^{\hat{\theta}-\st}\big)_\Q \arrow{rr}{\cong} \arrow{dl}{} \arrow[twoheadleftarrow,pos=.75]{dd}{j_\beta^*} & &
A_{PG_\beta}^*\big(R\big(\hat{Q},\beta\big)^{\hat{\theta}-\st}\big)_\Q \otimes_\Q S \arrow{dl}{} \arrow[twoheadleftarrow]{dd}{j_\beta^* \otimes \id} \\
A_{G_\beta \times T}^*\big(R\big(\hat{Q},\beta\big)^{\hat{\theta}-\st}\big)_\Q\arrow[crossing over,pos=.25]{rr}{\cong} & &
A_{G_\beta \times T}^*\big(R\big(\hat{Q},\beta\big)^{\hat{\theta}-\st}\big)_\Q \arrow[crossing over,pos=.25]{rr}{\cong} & &
A_{G_\beta}^*\big(R\big(\hat{Q},\beta\big)^{\hat{\theta}-\st}\big)_\Q \otimes_\Q S & \\
& A_{PG_\beta \times T}^*\big(R\big(\hat{Q},\beta\big)\big)_\Q \arrow[pos=.85]{rr}{(\phi^{-1})^*}[swap]{\cong} \arrow[hook]{dl}{} & &
A_{PG_\beta \times T}^*\big(R\big(\hat{Q},\beta\big)\big)_\Q \arrow[hook]{dl}{} \arrow[pos=.75]{rr}{\cong} \arrow[hook]{dl}{} & &
A_{PG_\beta}^*\big(R\big(\hat{Q},\beta\big)\big)_\Q \otimes_\Q S \arrow[hook]{dl}{}\\
A_{G_\beta \times T}^*\big(R\big(\hat{Q},\beta\big)\big)_\Q \arrow{rr}{(\phi^{-1})^*}[swap]{\cong} \arrow[two heads]{uu}{j_\beta^*} & &
A_{G_\beta \times T}^*\big(R\big(\hat{Q},\beta\big)\big)_\Q \arrow[two heads,crossing over,pos=.25]{uu}{j_\beta^*} \arrow{rr}{\cong} & &
A_{G_\beta}^*\big(R\big(\hat{Q},\beta\big)\big)_\Q \otimes_\Q S \arrow[two heads,crossing over,pos=.25]{uu}{j_\beta^* \otimes \id} &
\end{tikzcd}
\end{displaymath}}

\noindent
Here we have used the short-hand $S := S(X(T))_\Q = \Q[x_\alpha]_{\alpha \in Q_1}$.
In the left-hand side wall of the diagram, the action is via $a_1$ while in the middle plane the action is via $a_2$. In the right-hand side wall of the diagram, all morphisms are of the form $r \otimes \id$ for some homomorphism $r$ between the respective Chow rings.

Let $i_\beta := \tilde{\iota}_\beta \circ \phi^{-1}$. It is a morphism of quotient stacks
\begin{gather*}
i_\beta\colon \ \big[R\big(\hat{Q},\beta\big)/(G_\beta \times T)\big]_{\text{via }a_2} \to [R(Q,d)/(G_d \times T)].
\end{gather*}
The pull-back along this morphism exists and is given as follows:

\begin{lem}Let $\beta$ be a dimension vector which covers $d$.
\begin{enumerate}\itemsep=0pt
\item The pull-back map in equivariant intersection theory of $i_\beta$ is the map
\begin{gather*}
i_\beta^*\colon \ \bigg(\bigotimes_{i \in Q_0} \Q[x_{i,1},\dots,x_{i,d_i}] \bigg) \otimes_\Q \Q[x_\alpha]_{\alpha \in Q_1} \\
\qquad {}\to \bigg( \bigotimes_{i \in Q_0} \bigotimes_{\chi \in X(T)} \Q[x_{i,\chi,1},\dots,x_{i,\chi,\beta_{i,\chi}}] \bigg)\otimes_\Q \Q[x_\alpha]_{\alpha \in Q_1}
\end{gather*}
defined by $i_\beta^*(x_{i,r}) = e_r(\xi_{i,\chi_1,s_1}-\chi_1,\dots,\xi_{i,\chi_{d_i},s_{d_i}}-\chi_{d_i})$ and by $i_\beta^*(x_\alpha) = x_\alpha$ where $\alpha \in Q_1$.

\item The pull-backs of the restrictions
\begin{gather*}
\big[R\big(\hat{Q},\beta\big)^{\hat{\theta}-\sst}/(G_\beta \times T)\big]_{\text{\rm via }a_2}  \to \big[R(Q,d)^{\theta-\sst}/(G_d \times T)\big], \\
\big[R\big(\hat{Q},\beta\big)^{\hat{\theta}-\st}/(G_\beta \times T)\big]_{\text{\rm via }a_2}  \to \big[R(Q,d)^{\theta-\st}/(G_d \times T)\big]
\end{gather*}
of $i_\beta$ are the induced maps by $i_\beta^*$ on the quotients by the ideals described in Theorem~{\rm \ref{t:taut}}.
\end{enumerate}
\end{lem}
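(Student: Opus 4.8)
The plan is to obtain both parts by unwinding $i_\beta = \tilde{\iota}_\beta \circ \phi^{-1}$ and composing the two pull-backs already written down. Since $\phi$ is an isomorphism of quotient stacks, $i_\beta^* = (\phi^{-1})^* \circ \tilde{\iota}_\beta^* = (\phi^*)^{-1} \circ \tilde{\iota}_\beta^*$, so part~(1) is purely a composition. First I would recall that $\tilde{\iota}_\beta^*(x_\alpha) = x_\alpha$ and $\tilde{\iota}_\beta^*(\xi_{i,r}) = \xi_{i,\chi_{i,r},s_{i,r}}$, hence $\tilde{\iota}_\beta^*(x_{i,r}) = e_r(\xi_{i,\chi_1,s_1},\dots,\xi_{i,\chi_{d_i},s_{d_i}})$; this holds because $\tilde{\iota}_\beta$ is the inclusion of a linear subspace, equivariant for the homomorphism $G_\beta \times T \to G_d \times T$ that is the Levi inclusion on the first factor and the identity on the second, a homomorphism which restricts to the identity on the common maximal torus $T_d \times T$ (indeed $\xi_{i,r}$ and $\xi_{i,\chi_{i,r},s_{i,r}}$ are the same character in two labelings of one basis of $V_i$). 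Next I would apply $(\phi^*)^{-1}$: the map $\phi^*$ fixes $x_\alpha$ and sends $\xi_{i,\chi,s} \mapsto \xi_{i,\chi,s} + \chi$ --- ultimately because $\Phi$ rescales the $(i,\chi)$-block of $g$ by $\chi(t)$, so that $\xi_{i,\chi,s} \circ \Phi = \chi \cdot \xi_{i,\chi,s}$, the identification of $\phi^*$ with the induced map on characters being the Edidin--Graham/Kresch dictionary between equivariant Chow rings and Chow rings of quotient stacks --- so $(\phi^*)^{-1}$ fixes $x_\alpha$ and sends $\xi_{i,\chi,s} \mapsto \xi_{i,\chi,s} - \chi$. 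Composing the two yields precisely $i_\beta^*(x_{i,r}) = e_r(\xi_{i,\chi_1,s_1}-\chi_1,\dots,\xi_{i,\chi_{d_i},s_{d_i}}-\chi_{d_i})$ and $i_\beta^*(x_\alpha) = x_\alpha$, which is the assertion of~(1).

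For part~(2), I would first check that $i_\beta$ genuinely restricts to a morphism of the stable open substacks --- and likewise of the semistable ones. Because $\phi^{-1}$ is the identity on underlying spaces, this reduces to the inclusions $\tilde{\iota}_\beta\big(R(\hat{Q},\beta)^{\hat{\theta}-\st}\big) \subseteq R(Q,d)^{\theta-\st}$ and $\tilde{\iota}_\beta\big(R(\hat{Q},\beta)^{\hat{\theta}-\sst}\big) \subseteq R(Q,d)^{\theta-\sst}$. The first is exactly what makes Weist's closed immersion $\iota_\beta$ well-defined (Theorem~\ref{t:weist}); both follow uniformly from the fact that $\tilde{\iota}_\beta(N)$ is fixed by $T$ up to isomorphism, so that any of its subrepresentations degenerates, under a generic one-parameter subgroup of $T$, to a $T$-fixed --- hence graded --- subrepresentation of the same dimension vector, while a graded subrepresentation of $\tilde{\iota}_\beta(N)$ is $\tilde{\iota}_\beta$ of a subrepresentation $N' \subseteq N$; since $\hat{\theta}_{(i,\chi)} = \theta_i$, the slope of $N'$ in $\hat{Q}$ equals that of its image in $Q$, so $\hat{\theta}$-(semi)stability of $N$ forces $\theta$-(semi)stability of $\tilde{\iota}_\beta(N)$. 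Granting this, functoriality of equivariant pull-back applied to the commuting square formed by $i_\beta$, its restriction to the stable loci, and the open immersions $j_2$ and $j_\beta$ shows that $i_\beta^*$ maps $\ker(j_2^*)$ into $\ker(j_\beta^*)$ and that the induced map on the quotients coincides with the pull-back of that restriction; the semistable case is identical with $j_1$ in place of $j_2$. Since $\ker(j_1^*)$, $\ker(j_2^*)$ and the corresponding kernels for $\hat{Q}$ are the ideals of Theorem~\ref{t:taut} --- applied to $Q$ and to $\hat{Q}$, the latter being legitimate because $\beta$ has finite support so $R(\hat{Q},\beta)$ is governed by a finite quiver --- this is exactly what~(2) asserts.

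I do not anticipate a serious obstacle. Once the explicit shapes of $\tilde{\iota}_\beta^*$ and $\phi^*$ are in hand and Theorem~\ref{t:weist} is available, the lemma amounts to composing two maps and using surjectivity of the restriction homomorphisms; in particular the fact that $i_\beta^*$ carries one tautological ideal into the other falls out formally rather than from wrestling with the CoHA formulas. The one input that is not purely formal is that $i_\beta$ restricts to the (semi-)stable loci, and this is covered by Weist's description of the $T$-fixed points together with the standard one-parameter-subgroup degeneration argument in the semistable case.
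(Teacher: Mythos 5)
Your proposal is correct and follows essentially the same route as the paper, which leaves this lemma without an explicit proof precisely because it is the composition of the two pull-backs computed immediately beforehand: part~(1) is $(\phi^{-1})^*\circ\tilde{\iota}_\beta^*$ applied to the displayed formulas $\tilde{\iota}_\beta^*(\xi_{i,r})=\xi_{i,\chi_{i,r},s_{i,r}}$ and $\phi^*(\xi_{i,\chi,s})=\xi_{i,\chi,s}+\chi$, and part~(2) is the functoriality encoded in the two commutative diagrams. Your added verification that $\tilde{\iota}_\beta$ carries the $\hat{\theta}$-(semi)stable locus into the $\theta$-(semi)stable locus (via the graded-degeneration argument and $\hat{\theta}_{(i,\chi)}=\theta_i$) is a detail the paper takes for granted from Weist's setup, and it is argued correctly.
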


Next we determine the image of the localization map
\begin{displaymath}
\iota^*\colon \ A_T^*(M^{\theta-\st}(Q,d))_\Q \to A^*(M^{\theta-\st}(Q,d)^T)_\Q \otimes_\Q S(X(T))_\Q.
\end{displaymath}
Recall that $\smash{S(Z_d)_\Q^{W_d}}$ surjects onto $\smash{A_{PG_d}^*(R(Q,d))_\Q}$. Consider the commutative diagram{\footnotesize
\begin{displaymath}
\begin{tikzcd}
S(Z_d)_\Q \arrow[two heads]{r}{} &[-1.5em] A_{PT_d \times T}^*(R(Q,d))_\Q \arrow[two heads]{r}{} &[-1.5em] A_{PT_d \times T}^*\big(R(Q,d)^{\theta-\st}\big)_\Q \arrow{r}{} &[-1.5em] A_{PT_\beta \times T}^*\big(R\big(\hat{Q},\beta\big)^{\hat{\theta}-\st}\big)_\Q \\[-.5em]
S(Z_d)_\Q^{W_d} \arrow[two heads]{r}{} \arrow[hook]{u}{} \arrow[two heads]{rd}{} & A_{PT_d \times T}^*(R(Q,d))_\Q^{W_d} \arrow[two heads]{r}{} \arrow[hook]{u}{} & A_{PT_d \times T}^*\big(R(Q,d)^{\theta-\st}\big)_\Q^{W_d} \arrow{r}{} \arrow[hook]{u}{} & A_{PT_\beta \times T}^*\big(R\big(\hat{Q},\beta\big)^{\hat{\theta}-\st}\big)_\Q^{W_\beta} \arrow[hook]{u}{} \\[-.5em]
& A_{PG_d \times T}^*(R(Q,d))_\Q \arrow[two heads]{rd}{} \arrow{u}{\cong} \arrow[two heads]{r}{} & A_{PG_d \times T}^*\big(R(Q,d)^{\theta-\st}\big)_\Q \arrow{u}{\cong} \arrow{r}{i_\beta^*} & A_{PG_\beta}^*\big(R\big(\hat{Q},\beta\big)^{\hat{\theta}-\st}\big)_\Q \otimes \Q[x_\alpha]_{\alpha \in Q_1} \arrow{u}{\cong} \\
& & A_{T}^*\big(M^{\theta-\st}(Q,d)\big)_\Q \arrow{r}{\iota_\beta^*} \arrow{u}{\cong} & A^*\big(M^{\hat{\theta}-\st}\big(\hat{Q},\beta\big)\big)_\Q \otimes_\Q \Q[x_\alpha]_{\alpha \in Q_1} \arrow{u}{\cong}
\end{tikzcd}
\end{displaymath}}

\noindent
In the right-most column, $PG_\beta \times T$ acts via $a_2$.
Let $f_\beta\colon S(Z_d)_\Q^{W_d} \to A^*\big(M^{\hat{\theta}-\st}\big(\hat{Q},\beta\big)\big)_\Q \otimes_\Q \Q[x_\alpha]_{\alpha \in Q_1}$ be the composition of two diagonal arrows with the arrow $\iota_\beta^*$. It maps
\begin{displaymath}
f_\beta(z_{k,l}^{i,j}) = \sum_{\substack{1 \leq r_1 < \dots < r_k \leq d_i \\ 1 \leq t_1 < \dots < t_l \leq d_j}} \prod_{\nu=1}^k \prod_{\mu=1}^l (\xi_{i,\chi_{i,r_\nu},s_{i,r_\nu}} - \chi_{i,r_\nu} - \xi_{j,\chi_{j,t_\mu},s_{j,t_\mu}} + \chi_{j,t_\mu}).
\end{displaymath}
Note that $\xi_{i,\chi,s} - \xi_{j,\eta,t}$ are the (non-equivariant) Chern roots of the bundle $\VV_{i,\chi} \otimes \VV_{j,\eta}^\vee$ on $\smash{M^{\hat{\theta}-\st}\big(\hat{Q},\beta\big)}$. Note also that $\smash{f_\beta\big(z_{k,l}^{i,j}\big)}$ is independent of the choice of a representative from the class of translates of $\beta$. This shows:

\begin{thm} \label{t:main}
The image of the pull-back map
\begin{gather*}
\iota^*\colon \ A_T^*\big(M^{\theta-\st}(Q,d)\big)_\Q \to A^*\big(M^{\theta-\st}(Q,d)^T\big)_\Q \otimes_\Q S(X(T))_\Q \\
\hphantom{\iota^*\colon \ A_T^*\big(M^{\theta-\st}(Q,d)\big)_\Q\to}{} = \bigg( \bigoplus_\beta A^*\big(M^{\hat{\theta}-\st}\big(\hat{Q},\beta\big)\big) \bigg)\otimes_\Q \Q[x_\alpha]_{\alpha \in Q_1}
\end{gather*}
of the inclusion of the fixed point locus is the subring which is generated by the elements
$\big(f_\beta\big(z_{k,l}^{i,j}\big)\big)_\beta$
where $i \neq j$ are vertices of $Q$ and $k=1,\dots,d_i$, $l = 1,\dots,d_j$ and by elements of the form $(1,\dots,1) \otimes x_\alpha$ with $\alpha \in Q_1$.
\end{thm}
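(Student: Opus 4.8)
The plan is to chase the image of $\iota^*$ through the big commutative diagram preceding the theorem, which already reduces everything to an explicit computation. First I would recall that $\iota^* = (\iota_\beta^*)_\beta$ since the fixed locus is the disjoint union $\bigsqcup_\beta X_\beta$ with $X_\beta \cong M^{\hat{\theta}-\st}(\hat{Q},\beta)$ by Theorem~\ref{t:weist}, so it suffices to identify the image of each $\iota_\beta^*$ and then assemble. The key point is that $A_T^*(M^{\theta-\st}(Q,d))_\Q \cong A_{PG_d\times T}^*(R(Q,d)^{\theta-\st})_\Q$ is generated, as a ring, by the images of the $z_{k,l}^{i,j}$ under the surjection $S(Z_d)_\Q^{W_d} \onto A_{PG_d}^*(R(Q,d))_\Q$ (extended by the $x_\alpha$), because $S(Z_d)_\Q^{W_d}$ is generated by the $z_{k,l}^{i,j}$ and $A_{G_d\times T}^*(R(Q,d))_\Q$ is a further quotient. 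Hence the image of $\iota_\beta^*$ is generated by the elements $f_\beta(z_{k,l}^{i,j})$ together with the images of the $x_\alpha$.

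Next I would compute $f_\beta(z_{k,l}^{i,j})$ explicitly. Running $z_{k,l}^{i,j}$ through the two diagonal arrows $S(Z_d)_\Q^{W_d} \to A_{PG_d\times T}^*(R(Q,d))_\Q \to A_T^*(M^{\theta-\st}(Q,d))_\Q$ sends $\zeta_{r,t}^{i,j}$ to $\xi_{i,r} - \xi_{j,t}$; then $\iota_\beta^*$, which factors through $i_\beta^*$ by the diagram, sends $\xi_{i,r} \mapsto \xi_{i,\chi_{i,r},s_{i,r}} - \chi_{i,r}$ by part~(1) of the Lemma (the $-\chi$ shift comes from the pull-back $\phi^*$ along the automorphism $\Phi$ of $G_\beta\times T$). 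This yields the stated formula for $f_\beta(z_{k,l}^{i,j})$. I would then observe that $\xi_{i,\chi,s} - \xi_{j,\eta,t}$ are the non-equivariant Chern roots of $\VV_{i,\chi}\otimes\VV_{j,\eta}^\vee$ on $M^{\hat\theta-\st}(\hat Q,\beta)$, so $f_\beta(z_{k,l}^{i,j})$ lands in $A^*(M^{\hat\theta-\st}(\hat Q,\beta))_\Q \otimes_\Q \Q[x_\alpha]$, as required by the target identification in the statement; the $x_\alpha$ go to $1\otimes x_\alpha$. The final bookkeeping point is translation invariance: shifting the representative $\beta \mapsto \xi\cdot\beta$ relabels the $\chi$'s, but the differences $\chi_{i,r_\nu} - \chi_{j,t_\mu}$ appearing in the product are unchanged, so $f_\beta(z_{k,l}^{i,j})$ is well-defined on translation classes and the components of $\iota^*$ are compatible.

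The one genuine subtlety — and the step I expect to require the most care — is the surjectivity of $\iota_\beta^* \colon A_T^*(M^{\theta-\st}(Q,d))_\Q \to \im(\iota_\beta^*)$ onto the subring generated by the listed elements, i.e.\ that these generators really do generate the whole image and not merely a subring of it. This follows by tracking the surjectivity already recorded in the diagram: the composite $S(Z_d)_\Q^{W_d} \onto A_{PG_d}^*(R(Q,d))_\Q \cong A_{PG_d\times T}^*(R(Q,d))_\Q^{\text{(after tensoring with }S)} \onto A_{PG_d\times T}^*(R(Q,d)^{\theta-\st})_\Q \cong A_T^*(M^{\theta-\st}(Q,d))_\Q$ is surjective (the first map by the discussion after Theorem~\ref{t:taut}, the restriction by openness, cf.\ the surjectivity of $j_1^*,j_2^*$), so a generating set upstairs maps to a generating set for the image downstairs, and $\iota_\beta^*$ of a generator is $f_\beta$ of that generator. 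Combining over all $\beta$ gives that $\im(\iota^*)$ is the subring generated by the tuples $(f_\beta(z_{k,l}^{i,j}))_\beta$ and the $(1,\dots,1)\otimes x_\alpha$, which is the assertion of the theorem.
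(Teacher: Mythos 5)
Your proposal is correct and follows essentially the same route as the paper: the proof there is exactly the diagram chase you describe, namely that the composite $S(Z_d)_\Q^{W_d} \to A_{PG_d \times T}^*(R(Q,d))_\Q \to A_T^*\big(M^{\theta-\st}(Q,d)\big)_\Q$ (together with the $x_\alpha$) is surjective, so the image of $\iota^*$ is generated by the images $f_\beta\big(z_{k,l}^{i,j}\big)$ of the generators $z_{k,l}^{i,j}$, computed explicitly via $i_\beta^*$ and checked to be independent of the chosen translate of $\beta$. No discrepancies worth noting.
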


The description of the image in Theorem~\ref{t:main} is hard to handle in general but the case of an action with isolated fixed points is more manageable. Assume that each of the covering roots $\beta$ of $d$ is a real root of $\smash{\hat{Q}}$. In this case, each of the fixed point components $\smash{M^{\hat{\theta}-\st}\big(\hat{Q},\beta\big)}$ is a single point. As there are only finitely many covering dimension vectors with connected support up to translation for $d$, this means that $T$ acts with finitely many fixed points. In this case
\begin{displaymath}
f_\beta\big(z_{k,l}^{i,j}\big) = \sum_{\substack{1 \leq r_1 < \dots < r_k \leq d_i \\ 1 \leq t_1 < \dots < t_l \leq d_j}} \prod_{\nu=1}^k \prod_{\mu=1}^l \big(\chi^{(\beta)}_{j,t_\mu} - \chi^{(\beta)}_{i,r_\nu}\big).
\end{displaymath}

\begin{cor} \label{c:isolated}
Suppose that every covering root of $d$ is a real root of $\hat{Q}$. Let $B = \{\beta_1,\dots,\beta_N\}$ be a set of representatives of translation classes of covering roots of $d$ for which there exists a~$\smash{\hat{\theta}}$-stable representation. Then the image of the pull-back map
\begin{displaymath}
\iota^*\colon \  A_T^*\big(M^{\theta-\st}(Q,d)\big)_\Q \to \Q[x_\alpha]_{\alpha \in Q_1}^{\oplus N}
\end{displaymath}
of the inclusion of the fixed point locus is the subring which is generated by the elements
\begin{displaymath}
\bigg( \sum_{\substack{1 \leq r_1 < \dots < r_k \leq d_i \\ 1 \leq t_1 < \dots < t_l \leq d_j}} \prod_{\nu=1}^k \prod_{\mu=1}^l \big(\chi^{(\beta_1)}_{j,t_\mu} - \chi^{(\beta_1)}_{i,r_\nu}\big),\dots,\sum_{\substack{1 \leq r_1 < \dots < r_k \leq d_i \\ 1 \leq t_1 < \dots < t_l \leq d_j}} \prod_{\nu=1}^k \prod_{\mu=1}^l \big(\chi^{(\beta_N)}_{j,t_\mu} - \chi^{(\beta_N)}_{i,r_\nu}\big) \bigg),
\end{displaymath}
where $i \neq j$ are vertices of $Q$ and $k=1,\dots,d_i$, $l = 1,\dots,d_j$ and by the elements $(x_\alpha,\dots,x_\alpha)$ for $\alpha \in Q_1$.
\end{cor}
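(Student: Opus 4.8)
The plan is to deduce Corollary~\ref{c:isolated} from Theorem~\ref{t:main} by specializing the general formula for $f_\beta\big(z_{k,l}^{i,j}\big)$ to the situation at hand. First I would invoke the hypothesis that every covering root $\beta$ of $d$ is a real root of $\hat{Q}$. For a real root $\beta$, the stable moduli space $M^{\hat{\theta}-\st}\big(\hat{Q},\beta\big)$ is either empty or a single reduced point; this is the standard fact that a $\hat{\theta}$-stable representation of dimension vector a real root is rigid (see, e.g., Kac's theorem together with the fact that stable representations are Schurian). Discarding the $\beta$ with empty moduli space and keeping a set $B = \{\beta_1,\dots,\beta_N\}$ of representatives of translation classes for which $M^{\hat{\theta}-\st}\big(\hat{Q},\beta\big) \neq \emptyset$, the fixed-point locus $M^{\theta-\st}(Q,d)^T$ becomes a disjoint union of $N$ reduced points by Theorem~\ref{t:weist}. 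Consequently $A^*\big(M^{\hat{\theta}-\st}\big(\hat{Q},\beta_m\big)\big)_\Q \cong \Q$ for each $m$, and the target of $\iota^*$ in Theorem~\ref{t:main} collapses to $\big(\bigoplus_{m=1}^N \Q\big) \otimes_\Q \Q[x_\alpha]_{\alpha \in Q_1} \cong \Q[x_\alpha]_{\alpha \in Q_1}^{\oplus N}$, which is precisely the target stated in the corollary.

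Next I would track what happens to the generators. In the general formula
\begin{displaymath}
f_\beta\big(z_{k,l}^{i,j}\big) = \sum_{\substack{1 \leq r_1 < \dots < r_k \leq d_i \\ 1 \leq t_1 < \dots < t_l \leq d_j}} \prod_{\nu=1}^k \prod_{\mu=1}^l \big(\xi_{i,\chi_{i,r_\nu},s_{i,r_\nu}} - \chi_{i,r_\nu} - \xi_{j,\chi_{j,t_\mu},s_{j,t_\mu}} + \chi_{j,t_\mu}\big),
\end{displaymath}
the classes $\xi_{i,\chi,s} - \xi_{j,\eta,t}$ are the non-equivariant Chern roots of $\VV_{i,\chi}\otimes\VV_{j,\eta}^\vee$ on $M^{\hat{\theta}-\st}\big(\hat{Q},\beta\big)$, as noted in the excerpt just before Theorem~\ref{t:main}. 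When the moduli space is a single point, its Chow ring is $\Q$ in degree $0$ and vanishes in positive degrees, so every such Chern root maps to $0$. Hence each factor $\xi_{i,\chi_{i,r_\nu},s_{i,r_\nu}} - \xi_{j,\chi_{j,t_\mu},s_{j,t_\mu}}$ drops out and only the character contribution $\chi_{j,t_\mu} - \chi_{i,r_\nu} = \chi^{(\beta)}_{j,t_\mu} - \chi^{(\beta)}_{i,r_\nu}$ survives, giving
\begin{displaymath}
f_\beta\big(z_{k,l}^{i,j}\big) = \sum_{\substack{1 \leq r_1 < \dots < r_k \leq d_i \\ 1 \leq t_1 < \dots < t_l \leq d_j}} \prod_{\nu=1}^k \prod_{\mu=1}^l \big(\chi^{(\beta)}_{j,t_\mu} - \chi^{(\beta)}_{i,r_\nu}\big) \in \Q[x_\alpha]_{\alpha \in Q_1},
\end{displaymath}
exactly the scalar displayed in the excerpt preceding the corollary. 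Assembling these over $\beta \in B$ produces the tuple in the statement. The generators $(1,\dots,1)\otimes x_\alpha$ of Theorem~\ref{t:main} become $(x_\alpha,\dots,x_\alpha)$ under the identification of the target, and since the image ring in Theorem~\ref{t:main} is generated by the $\big(f_\beta(z_{k,l}^{i,j})\big)_\beta$ together with these, the corollary follows.

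The only genuinely nontrivial point is the claim that a covering root being a real root forces the stable moduli space to be a single reduced point rather than merely connected of some positive dimension; everything else is bookkeeping and the collapse of a graded ring with trivial positive part. I would justify this by recalling that $M^{\hat{\theta}-\st}\big(\hat{Q},\beta\big)$ is smooth of dimension $1 - \langle \beta,\beta\rangle_{\hat{Q}}$ (the Euler form of $\hat{Q}$) when nonempty, and that $\langle\beta,\beta\rangle_{\hat{Q}} = 1$ precisely when $\beta$ is a real root, so the dimension is $0$; smoothness then gives reducedness. One should also note, as flagged in the excerpt, that there are only finitely many covering dimension vectors with connected support up to translation, so $N < \infty$ and the direct sum is finite; covering roots automatically have connected support, so this applies. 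With these observations in place the proof is a direct specialization of Theorem~\ref{t:main}.
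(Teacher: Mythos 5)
Your proposal is correct and follows essentially the same route as the paper: the corollary is obtained by specializing Theorem~\ref{t:main} to the case where each fixed-point component $M^{\hat{\theta}-\st}\big(\hat{Q},\beta\big)$ is a single point, so that the non-equivariant Chern roots $\xi_{i,\chi,s}-\xi_{j,\eta,t}$ vanish and only the character contributions $\chi^{(\beta)}_{j,t_\mu}-\chi^{(\beta)}_{i,r_\nu}$ survive in $f_\beta\big(z_{k,l}^{i,j}\big)$. The only difference is that you spell out the justification (rigidity of stable representations of real root dimension, the dimension formula $1-\langle\beta,\beta\rangle_{\hat{Q}}$, finiteness of the set of covering roots up to translation) that the paper states without proof in the paragraph preceding the corollary.
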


\begin{ex} \label{e:proj_space}
Let us confirm the above formula in a well-known example. Let $Q$ be the $n+1$-Kronecker quiver. That is the quiver with vertices $i$ and $j$ and arrows $a_0,\dots,a_n\colon i \to j$. Let $d = (1,1)$ and let $\theta = (1,-1)$. A representation of $Q$ of dimension vector $d$ is a tuple $(p_0,\dots,p_n) \in k^n$. It is (semi\nobreakdash-)stable with respect to $\theta$ if and only if $(p_0,\dots,p_n) \neq (0,\dots,0)$. An isomorphism $M^\theta(Q,d) \to \P^n$ is provided by $(p_0,\dots,p_n) \mapsto [p_0:\dots:p_n]$.

The torus $T = \G_m^{n+1}$ acts on $\P^n$ by
\begin{displaymath}
t.p = [t_0p_0:\dots:t_np_n].
\end{displaymath}
Let $x_i$ be the character of $T$ given by $x_i(t) = t_i$. We equip the line bundle $\OO(-1)$ with a $T$-linearization as follows: for $v = (v_0,\dots,v_n)$ in the fiber of $\OO(-1)$ over a point $p$ let $t.v = (t_0v_0,\dots,t_nv_n)$. Then define
\begin{displaymath}
h := c_1^T(\OO(1)).
\end{displaymath}
Let $s_0,\dots,s_n \in H^0\big(\P^n,\OO(1)\big)$ be the usual sections. Then $s_\nu$ is of weight $-x_\nu$ with respect to the induced $T$-action on global sections. This implies that $s_\nu$ is a $T$-invariant section of the bundle $\OO(1) \otimes L(x_\nu)$, where $L(x_\nu)$ is the trivial bundle on $\P^n$ equipped with the $T$-linearization given by $x_\nu$. The bundle $\OO(1) \otimes (L(x_0) \oplus \dots \oplus L(x_n))$ hence has a no-where vanishing $T$-invariant global section which shows $(x_0+h)\cdots(x_n+h) = 0$ in $A_T^*\big(\P^n\big)_\Q$. It can be shown that this is the only relation, so
\begin{displaymath}
A_T^*(\P^n)_\Q = \Q[x_0,\dots,x_n,h]/(x_0+h)\cdots(x_n+h).
\end{displaymath}
The locus of $T$-fixed points of $\P^n$ is $\{e_0,\dots,e_n\}$, where $e_\nu = [0: \dots : 1 : \dots : 0]$ is the unit vector with an entry in the $\nu$\textsuperscript{th} position. Let $\iota_\nu$ be the inclusion of $\{e_\nu\}$ into $\P^n$. Then $\iota_\nu^*\OO(1) = L(-x_\nu)$, so $\iota_\nu^*(h) = -x_\nu$. The pull-back of the inclusion $\iota\colon (\P^n)^T \to \P^n$ is therefore the map
\begin{displaymath}
\iota^*\colon \ \Q[x_0,\dots,x_n,h]/(x_0+h)\cdots(x_n+h) \to \Q[x_0,\dots,x_n]^{\oplus n+1}
\end{displaymath}
given by $\iota^*(x_\nu) = (x_\nu,\dots,x_\nu)$ and $\iota^*(h) = -(x_0,\dots,x_n)$.

Let us try and match this with the formula from the previous corollary. The $n+1$ fixed points of the $T$-action on $M^\theta(Q,d)$ are given by dimension vectors $\beta_0,\dots,\beta_n$; the entries of these dimension vectors are $0$ or $1$ and the support of $\beta_\nu$ is the sub-quiver
\begin{displaymath}
(i,0) \xto{}{a_\nu} (j,x_\nu)
\end{displaymath}
of $\hat{Q}$. Corollary \ref{c:isolated} tells us that the image of $\iota^*$ is the subring of $\Q[x_0,\dots,x_n]^{\oplus n+1}$ which is generated by the elements $(x_\nu,\dots,x_\nu)$ and by $\smash{z_{1,1}^{i,j}} = (x_0,\dots,x_n)$. So the two descriptions of the image of $\iota^*$ do indeed agree.
\end{ex}

\begin{ex}
Let $Q$ be the $3$-Kronecker quiver. That is the quiver with two vertices $i$ and $j$ and 3 arrows $a,b,c\colon i \to j$. Consider the stability condition $\theta = (3,-2)$ and the dimension vector $d = (2,3)$ (so $\theta(d) = 0$). The dimension of the moduli space $M^\theta(Q,d)$ is $1 - \langle d,d \rangle_Q = 6$. The torus $T = \G_m^3$ acts by scaling the linear maps $M_a$, $M_b$, and $M_c$ individually. There are 13 $T$-fixed points (from which we see that the Euler characteristic of $M^\theta$ is 13) which correspond to the following two types of covering quivers:
\[
\begin{tikzpicture}[description/.style={fill=white,inner sep=2pt}]
\matrix(m)[matrix of math nodes, row sep=1.5em, column sep=2em, text height=1.5ex, text depth=0.25ex]
{
& 1 \\
2& 1 \\
& 1 \\
};
\path[->, font=\scriptsize]
(m-2-1) edge node[auto] {$a$} (m-1-2)
(m-2-1) edge node[auto] {$b$} (m-2-2)
(m-2-1) edge node[below left] {$c$} (m-3-2)
;
\end{tikzpicture}
\quad \quad
\begin{tikzpicture}[description/.style={fill=white,inner sep=2pt}]
\matrix(m)[matrix of math nodes, row sep=.5em, column sep=2em, text height=1.5ex, text depth=0.25ex]
{
& 1 \\
1& \\
& 1 \\
1& \\
& 1 \\
};
\path[->, font=\scriptsize]
(m-2-1) edge node[auto] {$\alpha_1$} (m-1-2)
(m-2-1) edge node[auto] {$\alpha_2$} (m-3-2)
(m-4-1) edge node[auto] {$\alpha_3$} (m-3-2)
(m-4-1) edge node[auto] {$\alpha_4$} (m-5-2)
;
\end{tikzpicture}
\]
In the right-hand picture $\alpha_1,\dots,\alpha_4 \in \{a,b,c\}$ such that $\alpha_1 \neq \alpha_2 \neq \alpha_3 \neq \alpha_4$ (up to $S_2$-symmetry, there are 12 of those combinations).

We show that in this case there are infinitely many one-dimensional $T$-orbits. For a pair $(x,y) \in k \times k$ let $M(x,y)$ be the representation
\begin{displaymath}
M(x,y) = \left( \begin{pmatrix} 0 & 0 \\ 0 & 1 \\ x & y \end{pmatrix}, \begin{pmatrix} 1 & 0 \\ 0 & 0 \\ 0 & 0 \end{pmatrix}, \begin{pmatrix} 0 & 0 \\ 1 & 0 \\ 0 & 1 \end{pmatrix} \right).
\end{displaymath}
It can be seen that every representation $M(x,y)$ is $\theta$-stable and that the $G_d$-orbits of $M(x,y)$ and $M(x',y')$ are disjoint, provided that $(x,y) \neq (x',y')$. Let $t = (u,v,w) \in T$. Then $t.M(x,y)$ is contained in the $G_d$-orbit of $M((u^2/w^2)x,(u/w)y)$; indeed $g\cdot t.M(x,y) = M((u^2/w^2)x,(u/w)y)$ for
\begin{displaymath}
g = \left( \begin{pmatrix} w & 0 \\ 0 & u \end{pmatrix}, \begin{pmatrix} w/v & 0 & 0 \\ 0 & 1 & 0 \\ 0 & 0 & u/w \end{pmatrix} \right).
\end{displaymath}
For $x,y \in k^\times$, the $T$-orbit of the isomorphism class $[M(x,y)]$, regarded as a point of $M^\theta(Q,d)$, is therefore $T[M(x,y)] = \big\{ \big[M\big(z^2x,zy\big)\big] \,|\,  z \in k^\times \big\}$; in particular, it is one-dimensional. We see that the value $x/y^2$ is a well-defined invariant of the $T$-orbit $T[M(x,y)]$ and it separates all these $T$-orbits. We have thus found a family of one-dimensional $T$-orbits indexed by $k^\times$.

Let $u_1 = \xi_{i,1}$, $u_2 = \xi_{i,2}$ be the equivariant Chern roots of the $G_d \times T$-equivariant bundle $\VV_i$ and let $v_s = \xi_{j,s}$ (with $s = 1,2,3$) be the equivariant Chern roots of $\VV_j$. In $\smash{S(Z_d)^{W_d}_\Q}$, we write $z_{k,l} := \smash{z_{k,l}^{i,j}}$, as we have only two vertices. We just need to consider the elements $z_{1,1}$, $z_{2,1}$, $z_{1,2}$, and $z_{1,3}$ as their images under $f\colon \smash{S(Z_d)_\Q^{W_d}} \to \smash{S(X(PT_d))_\Q^{W_d}}$ span $\smash{S(X(PT_d))_\Q^{W_d}}$ as a ring.

1. We look at the fixed point $[M]$ which corresponds to the covering quiver
\[
\begin{tikzpicture}[description/.style={fill=white,inner sep=2pt}]
\matrix(m)[matrix of math nodes, row sep=1.5em, column sep=2em, text height=1.5ex, text depth=0.25ex]
{
& (j,a) \\
(i,0)& (j,b) \\
& (j,c) \\
};
\path[->, font=\scriptsize]
(m-2-1) edge (m-1-2)
(m-2-1) edge (m-2-2)
(m-2-1) edge (m-3-2)
;
\end{tikzpicture}
\]
(abusing notation, we write $a$ for the character $x_a$, and so on) and dimension vector $\beta = (2,1,1,1)$.
After an appropriate choice of bases, the characters $\smash{\chi_{i,r}^{(\beta)}}$ are given as $\smash{\chi_{i,1}^{(\beta)}} = 0 = \smash{\chi_{i,2}^{(\beta)}}$, $\smash{\chi_{j,1}^{(\beta)}} = a$, $\smash{\chi_{j,2}^{(\beta)}} = b$, and $\smash{\chi_{j,3}^{(\beta)}} = c$.
We now compute the values $f_\beta(z_{k,l})$ as
\begin{alignat*}{3}
& f_\beta(z_{1,1}) = 2(a+b+c), \qquad && f_\beta(z_{2,1}) = a^2+b^2+c^2,& \\
& f_\beta(z_{1,2}) = 2e_2(a,b,c), \qquad && f_\beta(z_{1,3}) = 2abc.&
\end{alignat*}

2. We consider a fixed point $[M]$ of the second kind. Up to translation the support of the covering dimension vector is
\[
\begin{tikzpicture}[description/.style={fill=white,inner sep=2pt}]
\matrix(m)[matrix of math nodes, row sep=.5em, column sep=1em, text height=1.5ex, text depth=0.25ex]
{
& (2,0) \\
(1,-\alpha_1)& \\
& (2,-\alpha_1+\alpha_2) \\
(1,-\alpha_1+\alpha_2-\alpha_3)& \\
& (2,-\alpha_1+\alpha_2-\alpha_3+\alpha_4) \\
};
\path[->, font=\scriptsize]
(m-2-1) edge (m-1-2)
(m-2-1) edge (m-3-2)
(m-4-1) edge (m-3-2)
(m-4-1) edge (m-5-2)
;
\end{tikzpicture}
\]
In this case the characters $\chi_{i,r}^{(\beta)}$ are $\smash{\chi_{i,1}^{(\beta)}} = -\alpha_1$, $\smash{\chi_{i,2}^{(\beta)}} = -\alpha_1+\alpha_2-\alpha_3$, $\smash{\chi_{j,1}^{(\beta)}} = 0$, $\smash{\chi_{j,2}^{(\beta)}} = -\alpha_1+\alpha_2$, and $\smash{\chi_{j,3}^{(\beta)}} = -\alpha_1+\alpha_2-\alpha_3+\alpha_4$.
The difference $\smash{\chi_{j,t}^{(\beta)}}-\smash{\chi_{i,r}^{(\beta)}}$ is the entry in the t\textsuperscript{th} row and r\textsuperscript{th} column of the following table:
\begin{displaymath}
\begin{array}{r|c|c}
& 1 & 2 \\
\hline
1 & \alpha_1 & \alpha_1-\alpha_2+\alpha_3\\
\hline
2 & \alpha_2 & \alpha_3 \\
\hline
3 & \alpha_2-\alpha_3+\alpha_4 & \alpha_4
\end{array}
\end{displaymath}
Using these, we compute the elements $f_\beta(z_{k,l})$ as
\begin{gather*}
f_\beta(z_{1,1}) = 2\alpha_1+\alpha_2+\alpha_3+2\alpha_4, \\
f_\beta(z_{2,1}) = \alpha_1^2-\alpha_1\alpha_2+\alpha_1\alpha_3+\alpha_2\alpha_3+\alpha_2\alpha_4-\alpha_3\alpha_4 + \alpha_4^2, \\
f_\beta(z_{1,2}) = 2\alpha_1\alpha_2+2\alpha_1\alpha_4+\alpha_2^2-2\alpha_2\alpha_3+\alpha_3^2+2\alpha_3\alpha_4, \\
f_\beta(z_{1,3}) = \alpha_1\alpha_2^2-\alpha_1\alpha_2\alpha_3 + \alpha_1\alpha_2\alpha_4 + \alpha_1\alpha_3\alpha_4-\alpha_2\alpha_3\alpha_4+\alpha_3^2\alpha_4.
\end{gather*}

Let $C = \big\{ (\alpha_1,\dots,\alpha_4) \in \{a,b,c\}^4 \,|\,  \alpha_1 \neq \alpha_2 \neq \alpha_3 \neq \alpha_4 \big\}/S_2$ where $S_2$ reverses the order of the tuple. Denote the element of $C$ represented by $(\alpha_1,\dots,\alpha_4)$ by $\alpha_1\cdots \alpha_4$. Then
\begin{displaymath}
C = \{ abab,abac,abca,abcb,acab,acac,acbc,babc,bacb,bcac,bcbc,cabc \}.
\end{displaymath}
We can now determine the image of the map $\iota^*\colon A_T^*\big(M^\theta(Q,d)\big)_\Q \to \big(\Q \oplus \Q^C\big) \otimes_\Q S(X(T))_\Q \cong \Q[a,b,c]^{13}$. By Corollary~\ref{c:isolated}, it is the subring generated by $(a,\dots,a)^T$, $(b,\dots,b)^T$, $(c,\dots,c)^T$ and the vectors
\begin{gather*}
\begin{pmatrix}
2(a+b+c) \\ 3a+3b \\ 3a+b+2c \\ 4a+b+c \\ 2a+3b+c \\ 3a+2b+c \\ 3a+3c \\ 2a+b+3c \\ a+3b+2c \\ a+4b+c \\ a+2b+3c \\ 3b+3c \\ a+b+4c
\end{pmatrix}, \qquad
\begin{pmatrix}
a^2+b^2+c^2 \\ 2a^2 - ab + 2b^2 \\ 2a^2 - ac + bc + c^2 \\ 2a^2 + bc \\ a^2 - ab + 2b^2 + ac \\ 2a^2 - ab + b^2 + bc \\ 2a^2 - ac + 2c^2 \\ a^2 + ab - ac + 2c^2 \\ 2b^2 + ac - bc + c^2 \\ 2b^2 + ac \\ ab + b^2 - bc + 2c^2 \\ 2b^2 - bc + 2c^2 \\ ab + 2c^2
\end{pmatrix},
\\
\begin{pmatrix}
2ab+2ac+2bc \\
a^2 + 4ab + b^2 \\ a^2 + b^2 + 4ac \\ 2a^2 + 2ab + b^2 + 2ac - 2bc + c^2 \\ 4ab + b^2 + c^2 \\ a^2 + 4ab + c^2 \\ a^2 + 4ac + c^2 \\ b^2 + 4ac + c^2 \\ a^2 + b^2 + 4bc \\ a^2 + 2ab + 2b^2 - 2ac + 2bc + c^2 \\ a^2 + 4bc + c^2 \\ b^2 + 4bc + c^2 \\ a^2 - 2ab + b^2 + 2ac + 2bc + 2c^2
\end{pmatrix}, \qquad
\begin{pmatrix}
2abc \\
a^2b + ab^2 \\ -a^2b + ab^2 + 2a^2c \\ a^2b + ab^2 + a^2c - 2abc + ac^2 \\ 2ab^2 - b^2c + bc^2 \\ 2a^2b - a^2c + ac^2 \\ a^2c + ac^2 \\ b^2c + 2ac^2 - bc^2 \\ a^2b - ab^2 + 2b^2c \\ a^2b + ab^2 - 2abc + b^2c + bc^2 \\ a^2c - ac^2 + 2bc^2 \\ b^2c + bc^2 \\ a^2c - 2abc + b^2c + ac^2 + bc^2
\end{pmatrix}.
\end{gather*}
\end{ex}

\section{Thin quiver moduli}

We consider the special case of an acyclic finite quiver $Q$ and the dimension vector $d = \one := (1,\dots,1)$ (formally $d_i = 1$ for every $i \in Q_0$). In this case the group $G_{\one} = \smash{(\G_m)^{Q_0}}$ is a torus. A representation $M \in R(Q,\one)$ consists of $M_\alpha \in k$; an element $g \in G_{\one}$ acts via $g\cdot M = \smash{\big(g_{t(\alpha)}g_{s(\alpha)}^{-1}M_\alpha\big)_\alpha}$. Again, the action descends to an action of $PG_{\one} = G_{\one}/\Delta$. Let $T = \smash{(\G_m)^{Q_1}}$ which acts, like in the general case, by scaling. The action of $PG_{\one}$ can be recovered from the $T$-action by embedding $PG_{\one}$ as a subtorus via the map
\begin{displaymath}
\sigma_Q\colon \ G_{\one} \to T,\ g \mapsto \big(g_{t(\alpha)}g_{s(\alpha)}^{-1}\big)_\alpha,
\end{displaymath}
whose kernel is precisely $\Delta$; note that we assume $Q$ to be connected. Let $T_0$ be the cokernel of this map. That means we have an exact sequence of tori
\begin{displaymath}
1 \to PG_{\one} \to T \to T_0 \to 1.
\end{displaymath}
Let $\theta$ be a stability condition. Without loss of generality we may assume $\theta(\one) = 0$. As the image of $PG_{\one}$ inside $T$ acts trivially on $M^{\theta-\st}(Q,\one)$ we obtain an action of~$T_0$ on the moduli space. The torus $T_0$ acts with a dense orbit so the moduli space is toric~\cite{AH:99}. By virtue of the exact sequence of tori above~-- which splits~-- we obtain an isomorphism of stacks $\big[M^{\theta-\st}(Q,\one)/T_0\big] \cong \big[R(Q,\one)^{\theta-\st}/T\big]$. Hence, to determine the $T_0$-equivariant Chow ring of $M^{\theta-\st}(Q,\one)$ it suffices to compute the $T$-equivariant Chow ring of $R(Q,\one)^{\theta-\st}$. Using similar arguments as in the third section, we obtain the following characterizations of $A_T^*\big(R(Q,\one)^{\theta-\sst}\big)$ and $A_T^*\big(R(Q,\one)^{\theta-\st}\big)$ in terms of generators and relations:

\begin{prop} \label{p:taut_toric}The kernels of the surjections
\begin{displaymath}
\begin{tikzcd}
\Q[x_\alpha]_{\alpha \in Q_1} = &[-3em] A_{T}^*(R(Q,\one))_\Q \arrow{r}{j_1^*} \arrow{rd}{j_2^*} & A_{T}^*\big(R(Q,\one)^{\theta-\sst}\big)_\Q \arrow{d}{} \\
&& A_{T}^*\big(R(Q,\one)^{\theta-\st}\big)_\Q,
\end{tikzcd}
\end{displaymath}
which are induced by the open embeddings $j_1\colon R(Q,\one)^{\theta-\sst} \to R(Q,\one)$ and $j_2\colon R(Q,\one)^{\theta-\st} \to R(Q,\one)$ are given as follows: for a subset $I \sub Q_0$ let
\begin{displaymath}
x_I := \prod_{\substack{\alpha \in Q_1\\ s(\alpha) \in I,\ t(\alpha) \notin I}} x_\alpha.
\end{displaymath}
Then $\ker(j_1^*)$ is the ideal generated by all $x_I$ with $\theta(\one_I) > 0$ and $\ker(j_2^*)$ is generated by all expressions $x_I$ for which $\theta(\one_I) \geq 0$.
\end{prop}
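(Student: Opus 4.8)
The plan is to run the localization sequence for $T$-equivariant Chow groups, using that $R(Q,\one) = \A^{Q_1}$ with $T = \G_m^{Q_1}$ acting by scaling the coordinates, so that the loci to be removed are unions of coordinate subspaces; this is the same strategy as in the proof of Theorem~\ref{t:taut}, only considerably simpler since here all the symmetric groups are trivial. The first step is to identify the unstable loci combinatorially. Since $\dim V_i = 1$, a subrepresentation of $M \in R(Q,\one)$ is the same thing as a subset $I \sub Q_0$ (take $W_i = V_i$ for $i \in I$ and $W_i = 0$ otherwise), and this is a subrepresentation of $M$ precisely when $M_\alpha = 0$ for every arrow $\alpha$ in the directed cut $C_I := \{\alpha \in Q_1 \mid s(\alpha) \in I,\ t(\alpha) \notin I\}$; its dimension vector is $\one_I$, hence its slope is $\theta(\one_I)/|I|$. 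Because $\mu(M) = \theta(\one)/|Q_0| = 0$ by the normalization $\theta(\one) = 0$, the representation $M$ fails to be $\theta$-semistable (resp.\ $\theta$-stable) exactly when $M$ lies in the coordinate subspace $Z_I := \{N \in R(Q,\one) \mid N_\alpha = 0 \text{ for all } \alpha \in C_I\}$ for some proper nonempty $I \sub Q_0$ with $\theta(\one_I) > 0$ (resp.\ $\theta(\one_I) \geq 0$). Thus $R(Q,\one) \setminus R(Q,\one)^{\theta-\sst}$ is the union of the $Z_I$ over all proper nonempty $I$ with $\theta(\one_I) > 0$, and likewise for the stable locus with $\theta(\one_I) \geq 0$. (The degenerate sets $I = \emptyset$ and $I = Q_0$ have $C_I = \emptyset$, hence $Z_I = R(Q,\one)$ and $x_I = 1$; these do not occur here since $\theta(\one_I) = 0$, and $I$ is accordingly to be understood as a proper nonempty subset in the statement of the proposition.)

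For each such $I$, view $R(Q,\one) = \bigoplus_{\alpha \in Q_1} L(x_\alpha)$ as a $T$-equivariant bundle over a point; then $Z_I$ is the transverse zero locus of the obvious $T$-invariant section of $\bigoplus_{\alpha \in C_I} L(x_\alpha)$, so its equivariant fundamental class is, up to sign, $\prod_{\alpha \in C_I} c_1^T(L(x_\alpha)) = x_I$. Moreover $Z_I$ is itself an affine space with a linear $T$-action, so the restriction $A_T^*(R(Q,\one))_\Q \to A_T^*(Z_I)_\Q$ is surjective (it is the identity of $\Q[x_\alpha]_{\alpha \in Q_1}$ under the canonical identifications), and hence, by the projection formula, the image of the Gysin pushforward $A_*^T(Z_I)_\Q \to A_T^*(R(Q,\one))_\Q$ is the principal ideal $(x_I)$. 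Now write $Z := R(Q,\one) \setminus R(Q,\one)^{\theta-\sst}$. The localization sequence for equivariant Chow groups (see~\cite{EG:98}) identifies $\ker(j_1^*)$ with the image of the pushforward $A_*^T(Z)_\Q \to A_T^*(R(Q,\one))_\Q$. Since $A_*^T(Z)_\Q$ is generated by the classes of its integral $T$-invariant closed subschemes, each of which is contained in some $Z_I$ (equivalently, by iterating the Mayer--Vietoris sequence for the closed cover $Z = \bigcup Z_I$), this image equals $\sum_I \im\big(A_*^T(Z_I)_\Q \to A_T^*(R(Q,\one))_\Q\big) = \sum_{\theta(\one_I) > 0}(x_I)$, which is exactly the ideal in the statement. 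The computation of $\ker(j_2^*)$ is word for word the same, with $\theta(\one_I) > 0$ replaced throughout by $\theta(\one_I) \geq 0$.

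The only genuine work is the first step: translating the slope stability condition (equivalently, King's GIT stability) into the coordinate-cut condition on subsets $I \sub Q_0$, keeping track of the strict versus non-strict inequality and of the degenerate subsets, and quoting the correct form of the equivariant localization sequence; everything after that is formal. Alternatively, one can deduce the proposition from Theorem~\ref{t:taut} specialized to $d = \one$: every decomposition $d = d' + d''$ has the form $\one = \one_I + \one_{Q_0 \setminus I}$, the CoHA product $f * g$ degenerates (the shuffles are trivial and $\Delta_0 = 1$, $\Delta_1 = x_I$), and $\mu(\one_I) > \mu(\one_{Q_0 \setminus I}) \Leftrightarrow \theta(\one_I) > 0$, which reproduces the two ideal descriptions.
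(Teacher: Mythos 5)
Your proof is correct and follows essentially the same route the paper intends when it says ``using similar arguments as in the third section'': you identify the non-semistable (resp.\ non-stable) locus as the finite union of coordinate subspaces $Z_I$ over proper nonempty $I \sub Q_0$ with $\theta(\one_I)>0$ (resp.\ $\geq 0$), and compute the image of the excision pushforward as $\sum_I (x_I)$, which is exactly the thin-case specialization of the argument behind Theorem~\ref{t:taut}. The only nitpick concerns your alternative deduction: in the $G_{\one}\times T$-equivariant ring one has $\Delta_1=\prod_{\alpha\in C_I}\big(\xi_{t(\alpha)}-\xi_{s(\alpha)}+x_\alpha\big)$, which becomes $x_I$ only after restricting to $T$-equivariance (i.e., killing the $\xi_i$), but your primary argument does not rely on this.
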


In the above statement $\one_I \in \Z^{Q_0}$ denotes the characteristic function on the subset $I \sub Q_0$.

It is easy to read off a $\Q$-linear basis from this characterization as the ideal that we are dividing out is generated by monomials. For a tuple $\gamma = (\gamma_\alpha)_{\alpha \in Q_1} \in \smash{\Z_{\geq 0}^{Q_1}}$ write $x^\gamma := \smash{\prod_{\alpha \in Q_1}} \smash{x_\alpha^{\gamma_\alpha}}$. For a subset $I \sub Q_0$ put $J(I) := \{ \alpha \in Q_1 \,|\,  s(\alpha) \in I,\, t(\alpha) \notin I \}$. Then $x_I = \smash{x^{\one_I}}$. A basis of $A_T^*\big(R(Q,\one)^{\theta-\sst}\big)_\Q$ is given by all monomials~$x^\gamma$ where $\operatorname{supp}(\gamma)$ contains no subset $J(I)$ for which $\theta(\one_I) > 0$; the monomials $x^\gamma$ for which $J(I) \nsubseteq \operatorname{supp}(\gamma)$ for all $I \sub Q_0$ with $\theta(\one_I) \geq 0$ form a~basis of $A_T^*\big(R(Q,\one)^{\theta-\st}\big)_\Q$. A basis of $A_T^*\big(R(Q,\one)^{\theta-\sst}\big)_\Q$ can be obtained in a~similar way.

As a next step we would like to determine the pull-back of the embedding of the fixed point locus $M^{\theta-\st}(Q,\one) \to M^{\theta-\st}(Q,\one)^{T_0}$. We introduce the following notion: a subset $H \sub Q_1$ is called a spanning tree if the underlying graph of $(Q_0,H)$ is a tree, which means it is connected and cycle-free.
Introduce the formal symbol $\alpha^{-1}$ for every arrow $\alpha \in Q_1$ and formally define $s\big(\alpha^{-1}\big) = t(\alpha)$ and $t\big(\alpha^{-1}\big) = s(\alpha)$. An unoriented path is a sequence $p = \alpha_r^{\epsilon_r}\cdots\alpha_1^{\epsilon_1}$ such that $\smash{s\big(\alpha_{\nu+1}^{\epsilon_{\nu+1}}\big)} = \smash{t\big(\alpha_\nu^{\epsilon_\nu}\big)}$ for $\nu=1,\dots,r-1$. We define $s(p) = s\big(\alpha_1^{\epsilon_1}\big)$ and $t(p) = t\big(\alpha_r^{\epsilon_r}\big)$. By the spanning tree property there exists for every $i,j \in Q_0$ an unoriented path $p = \alpha_r^{\epsilon_r}\cdots\alpha_1^{\epsilon_1}$ in $H$ such that $s(p) = i$ and $t(p) = j$.

Let $H \sub Q_1$ be any subset. Define the representation $M_H \in R(Q,\one)$ by
\begin{displaymath}
M_\alpha = \begin{cases} 1, & \alpha \in H, \\ 0, & \alpha \notin H. \end{cases}
\end{displaymath}
Note that for a representation $M$ of $Q$ with support $H := \operatorname{supp}(M)$ the representation $M_H$ lies in the same $T$-orbit as $M$ because $T$ acts by scaling along the arrows.

Now assume that $H$ is a spanning tree. We say $H$ is $\theta$-stable if the representation $M_H$ is $\theta$-stable. Denote by $T_{H^c}$ the subtorus of all $t = (t_\alpha)_{\alpha \in Q_1} \in T$ for which $t_\alpha = 1$ whenever $\alpha \in H$. We use $H^c$ as a short-hand for $Q_1 - H$.

\begin{lem} \label{l:fix_toric}
Let $M \in R(Q,\one)^{\theta-\st}$.
\begin{enumerate}\itemsep=0pt
\item[$1.$] The isomorphism class $[M]$ is a fixed point of the $T_0$-action on $M^{\theta-\st}(Q,\one)$ if and only if $\operatorname{supp}(M)$ is a $\theta$-stable spanning tree. In this case $M$ and $M_H$ are isomorphic.
\item[$2.$] Let $H$ be a spanning tree of $Q$. Then the composition $T_{H^c} \to T \to T_0$ is an isomorphism.
\end{enumerate}
\end{lem}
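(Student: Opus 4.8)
The plan is to treat the two parts in turn, both resting on one elementary observation. For $K\sub Q_1$ write $\sigma_K:=\pr_K\circ\sigma_Q\colon G_{\one}\to\G_m^{K}$, $g\mapsto(g_{t(\alpha)}g_{s(\alpha)}^{-1})_{\alpha\in K}$, where $\pr_K$ is the projection onto the $K$-coordinates of $T=\G_m^{Q_1}$. A propagation argument along edges shows that $\sigma_K$ is surjective if and only if the graph $(Q_0,K)$ is cycle-free: if $(Q_0,K)$ is a forest one may prescribe an arbitrary value at a root of each tree component and propagate it along the tree uniquely (so $\sigma_K$ even admits a morphism section), while an unoriented cycle $\alpha_r^{\epsilon_r}\cdots\alpha_1^{\epsilon_1}$ in $K$ forces $\prod_\nu c_{\alpha_\nu}^{\epsilon_\nu}=1$ for every $(c_\alpha)$ in the image of $\sigma_K$. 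I will use this freely.

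\emph{Part 1.} Put $H=\operatorname{supp}(M)$. Since $T$ acts by scaling the arrows, $M$ lies in the $T$-orbit of $M_H$, so $[M]$ is a $T_0$-fixed point if and only if $[M_H]$ is, and when this holds $[M]=[M_H]$ in $M^{\theta-\st}(Q,\one)$, which on the stable locus means that $M$ and $M_H$ lie in the same $PG_{\one}$-orbit, i.e.\ are isomorphic; this gives the last sentence of the assertion. Because the $T$-action on the moduli space factors through $T_0$, the point $[M_H]$ is $T_0$-fixed exactly when for each $t\in T$ there is $g\in G_{\one}$ with $t.M_H=g\cdot M_H$. Comparing entries, and using that $M_H$ has a $1$ in position $\alpha$ precisely for $\alpha\in H$, this is exactly the surjectivity of $\sigma_H$, hence by the opening observation exactly the statement that $(Q_0,H)$ is a forest. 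Thus $[M]$ is a $T_0$-fixed point if and only if $\operatorname{supp}(M)$ is a forest.

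It remains to upgrade ``forest'' to ``$\theta$-stable spanning tree'', using the standing normalisation $\theta(\one)=0$ and the stability of $M$. A spanning tree is a forest, and if $H=\operatorname{supp}(M)$ is a spanning tree it is automatically $\theta$-stable because $M_H$ lies in the $T$-orbit of the stable representation $M$ and stability depends only on dimension vectors of subrepresentations, hence is $T$-invariant. So the content is the implication ``$\operatorname{supp}(M)$ a forest $\Rightarrow$ $(Q_0,\operatorname{supp}(M))$ connected''. If $(Q_0,H)$ were a disconnected forest, group its components into two nonempty classes to obtain a partition $Q_0=I\sqcup(Q_0\setminus I)$ with no arrow of $H$ joining the two classes; then $\one_I$ and $\one_{Q_0\setminus I}$ are the dimension vectors of proper nonzero subrepresentations of $M$, so stability forces $\theta(\one_I)<0$ and $\theta(\one_{Q_0\setminus I})<0$ (note $\mu(M)=\theta(\one)/|Q_0|=0$), contradicting $\theta(\one_I)+\theta(\one_{Q_0\setminus I})=\theta(\one)=0$. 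I expect this step, the one place where stability is really used, to be the crux of Part 1.

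\emph{Part 2.} Let $H$ be a spanning tree and decompose $T=T_H\times T_{H^c}$ along $Q_1=H\sqcup H^c$, with $T_H:=\G_m^{H}$ and $T_{H^c}$ identified with $\{1\}\times T_{H^c}$. By the opening observation $\sigma_H=\pr_H\circ\sigma_Q\colon G_{\one}\to T_H$ is surjective and admits a morphism section, and $\ker\sigma_H=\{g:g_{t(\alpha)}=g_{s(\alpha)}\ \forall\alpha\in H\}=\Delta$ because the underlying graph of $(Q_0,H)$ is connected. Hence the projection $T_H\times T_{H^c}\to T_H$ restricts to an isomorphism $\sigma_Q(G_{\one})\xrightarrow{\sim}T_H$ (surjective since $\sigma_H$ is, injective since $\ker\sigma_H=\Delta=\ker\sigma_Q$), so $\sigma_Q(G_{\one})$ is the graph of a homomorphism $\psi\colon T_H\to T_{H^c}$. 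The morphism $q\colon T\to T_{H^c}$, $(a,b)\mapsto b\,\psi(a)^{-1}$, is a surjection split by $b\mapsto(1,b)$ with kernel exactly $\sigma_Q(G_{\one})=\{(a,\psi(a))\}$, so it identifies $T_0=T/\sigma_Q(G_{\one})$ with $T_{H^c}$; under this identification the composite $T_{H^c}\hookrightarrow T\twoheadrightarrow T_0$ becomes $b\mapsto q(1,b)=b\,\psi(1)^{-1}=b$, i.e.\ the identity, which is what we want. This part is a routine torus computation once the graph-of-$\psi$ structure of $\sigma_Q(G_{\one})$ has been spotted.
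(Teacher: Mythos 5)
Your proof is correct. For part 2 it is in substance the paper's own argument: the paper defines a retraction $\tilde{\psi}\colon T\to T_{H^c}$ explicitly by $\tilde{\psi}(t)_\alpha=t_\alpha\cdot\big(t_{\alpha_r}^{\epsilon_r}\cdots t_{\alpha_1}^{\epsilon_1}\big)^{-1}$, with $\alpha_r^{\epsilon_r}\cdots\alpha_1^{\epsilon_1}$ the unoriented path in $H$ joining $s(\alpha)$ to $t(\alpha)$, and checks that it kills $\im\sigma_Q$ and restricts to the identity on $T_{H^c}$; your map $q(a,b)=b\,\psi(a)^{-1}$ is exactly this $\tilde{\psi}$, only packaged through the observation that $\im\sigma_Q$ is the graph of a homomorphism $\psi\colon T_H\to T_{H^c}$. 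Both arguments rest on the same auxiliary fact --- $\sigma_H\colon G_\one\to T_H$ is surjective with kernel $\Delta$ for a spanning tree $H$ --- which the paper proves by induction on leaves and you by propagation from a root; your version has the small advantage of exhibiting a section that is a morphism, which is what actually guarantees that the bijection $\im\sigma_Q\to T_H$ is an isomorphism of tori in arbitrary characteristic. The genuine divergence is in part 1: the paper dismisses it in one sentence by appealing to Weist's description of the fixed locus (Theorem~\ref{t:weist}), whereas you give a self-contained argument --- fixedness of $[M_H]$ is equivalent to surjectivity of $\sigma_H$, hence to the support being a forest, and stability together with the normalisation $\theta(\one)=0$ rules out a disconnected forest via the two complementary subrepresentations $\one_I$ and $\one_{Q_0\setminus I}$. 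That last step is correct and is exactly where stability must enter; your route avoids the covering-quiver machinery entirely, at the modest cost of reproving in this thin case what Weist's theorem already encodes.
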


\begin{proof}This can easily be deduced from the description of the fixed point locus in Theorem~\ref{t:weist}.

Denote $\phi\colon T_{H^c} \to T \to T_0$. For $\alpha$ in $Q_1$ let $p = \alpha_r^{\epsilon_r}\cdots \alpha_1^{\epsilon_1}$ be the unique unoriented path in~$H$ such that $s(p) = s(\alpha)$ and $t(p) = t(\alpha)$; note that if $\alpha \in H$ then $p = \alpha$. Define a~morphism $\smash{\tilde{\psi}}\colon T \to T_{H^c}$ by $\smash{\tilde{\psi}}(t) = s$, where
\begin{displaymath}
s_\alpha := t_\alpha \cdot \big(t_{\alpha_r}^{\epsilon_r}\cdots t_{\alpha_1}^{\epsilon_1}\big)^{-1}
\end{displaymath}
for all $\alpha \in Q_1$.
We argue that the morphism $\smash{\tilde{\psi}}$ descends to a morphism $\psi\colon T_0 \to H_{H^c}$. Let $g \in G_\one$. Define $t \in T$ by $t_\alpha = g_{t(\alpha)}g_{s(\alpha)}^{-1}$ (all $\alpha \in Q_1$). We obtain $\smash{\tilde{\psi}}(t) = s$, where
\begin{displaymath}
s_\alpha = g_{t(\alpha)}g_{s(\alpha)}^{-1}\big(\underbrace{g_{t(\alpha_r)}^{\epsilon_r}g_{s(\alpha_r)}^{-\epsilon_r}\dots g_{t(\alpha_1)}^{\epsilon_1}g_{s(\alpha_1)}^{-\epsilon_1}}_{=g_{t(p)}g_{s(p)}^{-1}}\big)^{-1} = 1
\end{displaymath}
for every $\alpha \in Q_1$.
We show that $\phi$ and $\psi$ are mutually inverse. It is easy to see that $\psi\phi$, which is the restriction $\smash{\tilde{\psi}}|_{T_{H^c}}$, is the identity. To show that $\phi\psi = \id_{T_0}$, it is sufficient to prove that for every $t \in T$, the product $r := t\smash{\tilde{\psi}}(t)^{-1}$ lies in the image of $\sigma_Q\colon G_\one \to T$. We compute
\begin{displaymath}
r_\alpha = t_{\alpha_r}^{\epsilon_r}\cdots t_{\alpha_1}^{\epsilon_1}
\end{displaymath}
for all $\alpha \in Q_1$.
So $r_\alpha = t_\alpha$ for every $\alpha \in H$. By the following lemma there exists $g \in G_\one$ such that $t_\alpha = g_{t(\alpha)}g_{s(\alpha)}^{-1}$ for every $\alpha \in H$. For an arbitrary $\alpha \in Q_1$, we then obtain
\begin{displaymath}
r_\alpha = g_{t(p)}g_{s(p)}^{-1} = g_{t(\alpha)}g_{s(\alpha)}^{-1}.
\end{displaymath}
Therefore, $r$ does lie in $\im \sigma_Q$. This proves the second assertion of the lemma.
\end{proof}

\begin{lem}Let $H$ be a quiver whose underlying unoriented graph is a tree. Then the morphism $\sigma_H\colon G_\one \to T_H = \G_m^{H_1}$ is surjective.
\end{lem}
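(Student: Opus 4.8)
The plan is to construct, for an arbitrary target element $t = (t_\alpha)_{\alpha \in H_1} \in T_H$, an explicit preimage $g \in G_\one$, using that in a tree any two vertices are joined by a \emph{unique} unoriented path. First I would fix a base vertex $i_0$ of $H$ and put $g_{i_0} = 1$. For any other vertex $j$, let $p = \alpha_r^{\epsilon_r}\cdots\alpha_1^{\epsilon_1}$ be the unique unoriented path in $H$ from $i_0$ to $j$ and set
\[
g_j := \prod_{\nu=1}^{r} t_{\alpha_\nu}^{\epsilon_\nu}.
\]
Uniqueness of the path — which is exactly the tree condition — makes this well-defined, and it evidently defines an element $g = (g_j)_j \in G_\one$.

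Next I would check that $\sigma_H(g) = t$, i.e.\ that $g_{t(\alpha)}g_{s(\alpha)}^{-1} = t_\alpha$ for every $\alpha \in H_1$. The key observation is that $\alpha$ is itself an edge of the tree, so concatenating $\alpha$ (respectively $\alpha^{-1}$) with the unique path from $i_0$ to $s(\alpha)$ (respectively to $t(\alpha)$) produces the unique path from $i_0$ to $t(\alpha)$ (respectively to $s(\alpha)$); there can be no cancellation, since a tree contains no cycles. Comparing the defining products for $g_{s(\alpha)}$ and $g_{t(\alpha)}$ then yields $g_{t(\alpha)} = g_{s(\alpha)}\cdot t_\alpha$, which is precisely the required identity, and the two orientation cases are symmetric. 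Hence $\sigma_H$ is surjective.

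An alternative, and perhaps slicker, route is induction on $|H_1|$: the case $H_1 = \varnothing$ is trivial, and for the inductive step one picks a leaf $v$ of $H$ together with its unique incident arrow $\alpha$, applies the hypothesis to the (still tree-shaped) quiver obtained by deleting $v$ and $\alpha$ to realize $(t_\beta)_{\beta \neq \alpha}$ by some $g'$, and then extends $g'$ by defining $g_v := g'_{t(\alpha)}t_\alpha^{-1}$ if $s(\alpha) = v$ and $g_v := g'_{s(\alpha)}t_\alpha$ if $t(\alpha) = v$. I do not expect a genuine obstacle here; the only thing needing care is the bookkeeping of edge orientations and of the exponents $\epsilon_\nu$ when composing unoriented paths, together with the remark that it is precisely the absence of cycles in $H$ that prevents cancellation when an arrow is appended to the path to its source.
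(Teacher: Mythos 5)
Your proposal is correct, and your primary argument takes a different route from the paper's. The paper proves the lemma by induction on the number of vertices: it chooses a leaf $l$ with its unique incident arrow $\gamma\colon k \to l$, applies the inductive hypothesis to the subtree obtained by deleting $l$ and $\gamma$, and extends the resulting $g'$ by setting $g_l := t_\gamma g'_k$ --- which is precisely the ``alternative, perhaps slicker'' induction you sketch at the end. Your main argument instead constructs the preimage globally in one step: root the tree at $i_0$, set $g_{i_0}=1$, and define $g_j$ as the signed product of the $t_{\alpha_\nu}^{\epsilon_\nu}$ along the unique unoriented path from $i_0$ to $j$. This is equally valid and arguably more transparent, since it exhibits the section of $\sigma_H$ explicitly rather than recursively; the induction buys slightly less bookkeeping with orientations. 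One small imprecision: your claim that ``there can be no cancellation'' when appending $\alpha$ to the path from $i_0$ to $s(\alpha)$ is not quite right --- if $t(\alpha)$ is the parent of $s(\alpha)$ in the rooted tree, that path ends in $\alpha^{-1}$ and the concatenation does reduce. But the required identity $g_{t(\alpha)} = g_{s(\alpha)}t_\alpha$ holds in that case as well (the last factor of $g_{s(\alpha)}$ is $t_\alpha^{-1}$), so this is a matter of phrasing, not a gap.
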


\begin{proof}We argue by induction on the number $n$ of vertices of $H$. The base $n=1$ is trivial. For $n > 1$ choose a leaf $l \in H_0$ of $H$, i.e., a vertex which is adjacent to precisely one other vertex. A leaf exists because the underlying graph of $H$ is a tree. Without loss of generality we assume that $l$ is a sink; the case of a source is completely analogous. Let $\gamma\colon k \to l$ be the unique arrow adjacent to~$l$. Consider the subquiver $H'$ defined by $H'_0 = H_0 - \{l\}$ and $H'_1 = H_1 - \{\gamma\}$. Let $G' := G_{\one_{H'}}$. Then $G_\one = G' \times \G_m$. By induction assumption there exists $g' \in G'$ such that $\smash{g'_{t(\alpha)}{g'}_{s(\alpha)}^{-1}}$ for all $\alpha \in H'_1$. Define $g_l := t_\gamma g'_k$. Then $g := (g',g_l)$ is an inverse image of $t$ under $\sigma_H\colon G_\one \to T_H$.
\end{proof}

We hence may identify for a $\theta$-stable spanning tree $H$ the equivariant Chow ring $\smash{A_{T_0}^*}\!(\{[M_H]\})_\Q$ of the (isolated) fixed point $[M_H] \in \smash{M^{\theta-\st}}(Q,\one)$ with
\begin{displaymath}
A_{T_{H^c}}^*(\pt)_\Q \cong A_T^*\big(R(H,\one)^{\theta-\st}\big)_\Q.
\end{displaymath}
The pull-back of the inclusion $i_H$ of the fixed point then corresponds to the map
\begin{displaymath}
i_H^*\colon \ A_T^*\big(R(Q,\one)^{\theta-\st}\big)_\Q \to A_{T_{H^c}}^*(\pt)_\Q = \Q[x_\alpha]_{\alpha \in H^c},
\end{displaymath}
which is defined by
\begin{displaymath}
i_H^*(x_\alpha) = \begin{cases} x_\alpha, & \alpha \in H^c, \\ 0, & \alpha \in H. \end{cases}
\end{displaymath}

We now want to describe the image of the pull-back of the inclusion of the fixed point locus. We use the following result of Brion. It is the algebraic analog of \cite[Theorem~1.2.2]{GKM:98}.

\begin{thm}[{\cite[Theorem~3.4]{Brion:97}}] \label{t:brion}
Let $X$ be a smooth projective variety on which a torus $T$ acts with finitely many fixed points $x_1,\dots,x_N$ and with finitely many one-dimensional orbits. Then the image of the localization map
\begin{displaymath}
i^*\colon \ A_T^*(X)_\Q \to S(X(T))_\Q^{\oplus N}
\end{displaymath}
consists of all tuples $(f_1,\dots,f_N)$ such that $f_i \equiv f_j$ modulo $\chi$ whenever there exists a one-dimensional $T$-orbit on which $T$ acts with the character~$\chi$ and whose closure contains~$x_i$ and~$x_j$.
\end{thm}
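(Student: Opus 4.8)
\noindent\emph{Proof plan.} The plan is to prove the two inclusions between $\im(i^*)$ and the GKM subring separately. The inclusion ``$\im(i^*)\subseteq$ GKM subring'' is the soft direction and reduces to a computation on $\P^1$; the reverse inclusion, that every compatible tuple lifts, carries the real content and will come from freeness of $A_T^*(X)_\Q$ over $S:=A_T^*(\pt)_\Q$ together with a Chang--Skjelbred type argument.

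For the first inclusion, fix a one-dimensional orbit $O$ on which $T$ acts through a character $\chi$ and whose closure $\bar O$ passes through the fixed points $x_i$ and $x_j$. Since $O\cong\G_m$ is smooth and $X$ is smooth and projective, $\bar O$ is a smooth complete rational curve, hence $T$-equivariantly a $\P^1$ with $x_i,x_j$ at the two poles and $T$ acting through $\chi$. A direct computation (choosing a $T$-linearization of $\OO(1)$ and using the two tautological sections, exactly as in Example~\ref{e:proj_space}) gives $A_T^*(\P^1)_\Q\cong S[h]/\big(h(h+\chi)\big)$ with $h=c_1^T(\OO(1))$, and shows that restriction to the two fixed points identifies this ring with $\{(f,g)\in S\oplus S\mid f\equiv g \bmod \chi\}$. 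Pulling back a class $c\in A_T^*(X)_\Q$ along $\bar O\into X$ and using functoriality then forces $c|_{x_i}\equiv c|_{x_j}\bmod\chi$; ranging over all one-dimensional orbits gives ``$\subseteq$''.

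For the reverse inclusion I would first record that $A_T^*(X)_\Q$ is free over $S$ and that $i^*$ is injective. To see this, choose a generic one-parameter subgroup $\lambda\colon\G_m\to T$ with $X^\lambda=X^T$ --- possible because the finitely many fixed points contribute only finitely many tangent weights, so $\lambda$ need only avoid finitely many hyperplanes --- and invoke the Bia{\l}ynicki--Birula decomposition: its strata are $T$-invariant affine bundles over the fixed points, i.e.\ affine spaces, and the resulting filtration of $X$ by closed $T$-invariant subvarieties, fed into the localization long exact sequences for equivariant Chow groups, yields $A_T^*(X)_\Q\cong\bigoplus_{p\in X^T}S$ as an $S$-module and injectivity of $i^*$; the localization theorem moreover makes $i^*$ an isomorphism after inverting finitely many characters. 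Now a Chang--Skjelbred argument (this is where freeness is used) identifies
\[
\im\big(i^*\big)\;=\;\bigcap_{T'}\im\Big(A_T^*\big(X^{T'}\big)_\Q\to A_T^*\big(X^T\big)_\Q\Big),
\]
the intersection over corank-one subtori $T'\subseteq T$. Only the finitely many $T'$ fixing some one-dimensional orbit contribute a genuine condition; for such $T'$ the locus $X^{T'}$ is $X^T$ together with the (smooth, rational) closures of the $T'$-fixed one-dimensional orbits, and the second paragraph computes $\im\big(A_T^*(X^{T'})_\Q\to A_T^*(X^T)_\Q\big)$ to be exactly the tuples pairwise congruent modulo the character of $T$ on each such orbit. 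Intersecting over all $T'$ recovers the description in the statement.

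The step I expect to be the main obstacle is the Chang--Skjelbred identification: deducing from freeness of $A_T^*(X)_\Q$ that $\im(i^*)$ is cut out \emph{precisely} by the corank-one conditions, rather than merely being contained in their intersection. Concretely this means analysing the connecting maps in the localization sequences for the filtration of $X$ by orbit dimension, $X^T=X_{(0)}\subseteq X_{(1)}\subseteq\cdots\subseteq X$, and showing that membership in $\im(i^*)$ is already detected on the one-skeleton $X_{(1)}$, whose components are the fixed points and the orbit-closure $\P^1$'s. A subsidiary technical point --- this is where smoothness and projectivity of $X$ really enter --- is the smoothness of the one-dimensional orbit closures, which is what makes the $\P^1$-computation available; absent it one would have to pass to normalizations and check this does not alter the relevant images.
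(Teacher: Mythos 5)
The paper does not prove this statement: it is quoted verbatim from Brion \cite[Theorem~3.4]{Brion:97} and used as a black box, so there is no internal proof to compare against. Your outline reproduces the standard argument (essentially Brion's own, Theorems~3.2--3.4 of that paper, the algebraic counterpart of Chang--Skjelbred/GKM), and the architecture --- the easy inclusion via restriction to orbit closures, freeness and injectivity via the Bia{\l}ynicki--Birula filtration, then reduction to corank-one subtori --- is the right one.

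Two points, one substantive and one corrective. First, the step you yourself flag as the main obstacle, the identification $\im(i^*)=\bigcap_{T'}\im\big(A_T^*(X^{T'})_\Q\to A_T^*(X^T)_\Q\big)$, carries the entire content of the theorem and is only named, not argued; as written this is a correct plan rather than a proof. The standard way to close it: with $A_T^*(X)_\Q$ free over $S$ and $i^*$ an isomorphism after inverting finitely many characters, $\im(i^*)$ is a free $S$-submodule of full rank in $A_T^*(X^T)_\Q$, hence equals the intersection of its localizations at the height-one primes of $S$; only the primes $(\chi)$ with $\chi$ a tangent weight at some fixed point contribute, and the localization at $(\chi)$ is computed from $X^{T'}$, $T'=\ker(\chi)^\circ$, by applying the localization theorem first to the pair $(T',X)$ and then to $(T/T',X^{T'})$. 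Second, your justification of the smoothness of $\bar O$ (``since $O$ is smooth and $X$ is smooth and projective'') is not a valid deduction: closures of smooth one-dimensional orbits in smooth varieties can be singular, e.g.\ the cuspidal cubic arising from $t\cdot(x,y)=\big(t^2x,t^3y\big)$. The conclusion is nevertheless true here, but it uses both finiteness hypotheses: they force the tangent weights at each fixed point to be nonzero and pairwise non-proportional, so for each corank-one subtorus $T'$ the smooth fixed locus $X^{T'}$ has tangent space of dimension at most one at every fixed point it contains; its positive-dimensional components are therefore smooth rational curves, and these are precisely the closures of the one-dimensional orbits. Making this explicit would also discharge the normalization caveat you raise at the end.
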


We need to require $Q$ to be acyclic and $\theta$ to be generic for $\one$ in order to ensure the moduli space is smooth and projective. The action of $T_0$ on $M^{\theta}(Q,\one)$ possesses just finitely many one-dimensional orbits. To describe them we introduce the notion of a spanning almost tree. A subset $\Omega \sub Q_1$ is called a spanning almost tree if it is not a spanning tree but there exists an arrow $\alpha \in \Omega$ such that $\Omega-\{\alpha\}$ is a spanning tree. Note that this forces $(Q_0,\Omega)$ to be connected. Given a spanning almost tree we again define a representation $M_\Omega \in R(Q,\one)$ by assigning $M_\alpha = 1$ for $\alpha \in \Omega$ and $M_\alpha = 0$ otherwise. We say $\Omega$ is $\theta$-stable if $M_\Omega$ is $\theta$-stable.

\begin{lem}Let $M \in R(Q,\one)^{\theta}$. The orbit of $[M]$ under $T_0$ is one-dimensional if and only if $\operatorname{supp}(M)$ is a $\theta$-stable spanning almost tree.
\end{lem}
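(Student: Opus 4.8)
The plan is to analyze the $T_0$-action on $M^{\theta-\st}(Q,\one)$ orbit by orbit, using the observation that every $T$-orbit in $R(Q,\one)^{\theta-\st}$ has a canonical representative $M_H$ with $H = \operatorname{supp}(M)$, and that the $T_0$-action on the moduli space is induced from the $T$-action on $R(Q,\one)^{\theta-\st}$ via the exact sequence $1 \to PG_\one \to T \to T_0 \to 1$. Since $\big[M^{\theta-\st}(Q,\one)/T_0\big] \cong \big[R(Q,\one)^{\theta-\st}/T\big]$, the $T_0$-orbit of $[M]$ in the moduli space is the image of the $T$-orbit of $M$ in $R(Q,\one)^{\theta-\st}$, so its dimension equals $\dim T \cdot M - \dim(PG_\one \cap \text{Stab}_T(M))$, suitably interpreted. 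Concretely: the stabilizer in $T$ of the point $M_H$ is exactly $T_{H^c}$ (an element $t$ fixes $M_H$ iff $t_\alpha = 1$ for all $\alpha \in H$), so the $T$-orbit of $M_H$ has dimension $|H|$, and the image $T_0$-orbit has dimension $|H| - \dim PG_\one = |H| - (|Q_0| - 1)$.

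First I would make this dimension count precise. The $T_0$-orbit of $[M]$ (with $H = \operatorname{supp} M$) has dimension equal to $\dim T_0[M_H]$. Using the isomorphism of stacks, this orbit is the image of $T \cdot M_H$ under the quotient map, and its dimension is $\dim(T \cdot M_H) - \dim(\im \sigma_Q \cap \text{Stab}_T M_H)$. Since $\text{Stab}_T(M_H) = T_{H^c}$ has dimension $|Q_1| - |H|$, we get $\dim(T \cdot M_H) = |H|$. The intersection $\im\sigma_Q \cap T_{H^c}$ is computed by noting $\sigma_Q$ is injective (its kernel is $\Delta$, so $PG_\one \hookrightarrow T$) and asking when $\sigma_Q(g) \in T_{H^c}$, i.e.\ $g_{t(\alpha)} = g_{s(\alpha)}$ for all $\alpha \in H$. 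If $H$ is connected this forces $g \in \Delta$, so the intersection is trivial and $\dim T_0[M_H] = |H| - (|Q_0| - 1)$. Hence the orbit is one-dimensional precisely when $|H| = |Q_0|$ and $H$ is connected. A connected subset $H$ of $Q_1$ with $|H| = |Q_0|$ on the vertex set $Q_0$ has exactly one independent cycle, so it is a spanning almost tree (it contains a spanning tree, and one extra edge); conversely a $\theta$-stable spanning almost tree $\Omega$ gives $|\Omega| = |Q_0|$ and connectedness, so $M_\Omega$ has a one-dimensional $T_0$-orbit. The $\theta$-stability of $\Omega$ is exactly the condition that $M_\Omega$ (equivalently $[M]$) actually lies in $R(Q,\one)^{\theta-\st}$.

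I would organize the writeup as: (1) reduce to the representative $M_H$, $H = \operatorname{supp}(M)$, via the remark that $M$ and $M_H$ lie in the same $T$-orbit hence the same $T_0$-orbit in the moduli space; (2) compute $\text{Stab}_T(M_H) = T_{H^c}$ and hence $\dim(T\cdot M_H) = |H|$; (3) compute $\dim T_0[M_H] = |H| - (|Q_0|-1)$ when $H$ is connected, using that $\sigma_Q$ restricted to $\im\sigma_Q \cap T_{H^c}$ is trivial because $H$ connected forces the corresponding $g$ into $\Delta$ (this is where connectedness of $(Q_0,H)$ enters, and it parallels the argument already used in the proof of Lemma~\ref{l:fix_toric}); (4) identify the combinatorial condition ``$|H| = |Q_0|$ and $(Q_0,H)$ connected'' with ``$H$ is a spanning almost tree''; (5) note that $[M] \in M^{\theta-\st}(Q,\one)$ forces $M_H$ to be $\theta$-stable, which is by definition ``$H$ is $\theta$-stable''.

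The main obstacle will be step (3): verifying cleanly that the image $T_0$-orbit really drops in dimension by exactly $\dim PG_\one$ and not more. This requires knowing that $\im\sigma_Q$ meets $\text{Stab}_T(M_H)$ trivially, which hinges on the connectedness of the subgraph $(Q_0,H)$, not just of $Q$ itself — a one-dimensional orbit in the moduli space could a priori also arise from a disconnected $H$ whose $T$-orbit is higher-dimensional but collapses under $T_0$. The key point is that if $(Q_0,H)$ is disconnected, then $\im\sigma_Q \cap T_{H^c}$ is positive-dimensional (one can scale each connected component of $(Q_0,H)$ independently modulo $\Delta$), so the dimension drop is strictly larger than $|Q_0|-1$, and one checks directly that in that regime $\dim T_0[M_H]$ cannot equal $1$ for a $\theta$-stable $M$ (a $\theta$-stable representation of a connected quiver cannot have disconnected support with more than one ``free'' component contributing, essentially because $\theta$-stability of $M$ with $\operatorname{supp} M = H$ disconnected would require each component to already be $\theta$-semistable of the same slope, contradicting genericity/stability). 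Spelling this edge case out carefully — or better, observing that $\theta$-stability of $M_H$ already forces $(Q_0, \operatorname{supp} M_H)$ to be connected — is the technical heart of the argument; everything else is bookkeeping with the exact sequence of tori.
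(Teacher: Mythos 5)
Your proposal is correct in substance but takes a genuinely different route from the paper. You compute the orbit dimension via stabilizers and the exact sequence $1\to PG_\one\to T\to T_0\to 1$: with $H=\supp(M)$ connected (which $\theta$-stability forces, a stable representation being indecomposable), $\mathrm{Stab}_T(M_H)=T_{H^c}$ and $\im\sigma_Q\cap T_{H^c}$ is trivial, so $\dim T_0[M_H]=|H|-(|Q_0|-1)$, the first Betti number of the support graph, which equals $1$ exactly for spanning almost trees; both implications drop out at once. The paper instead argues by contradiction: assuming $\supp(M)$ is neither a spanning tree nor a spanning almost tree, it removes a maximal subset $B=\{\alpha_1,\dots,\alpha_r\}$ with $r\ge 2$ leaving a spanning tree $H$, and exhibits an explicit $r$-parameter family $M_t$ of pairwise non-isomorphic representations in the $T$-orbit (non-isomorphy via Lemma~\ref{l:fix_toric}: $g\cdot M_H=M_H$ forces $g=1$), so the orbit would be at least $r$-dimensional; the converse is declared obvious. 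Your count is more systematic and yields the orbit dimension in general; the paper's construction avoids any bookkeeping with fibers of the quotient map. Two slips in your writeup, neither fatal: (i) the intermediate expression $\dim(T\cdot M_H)-\dim(\im\sigma_Q\cap\mathrm{Stab}_T M_H)$ is not the orbit dimension --- one must subtract $\dim(\im\sigma_Q\cdot M_H)=(|Q_0|-1)-\dim(\im\sigma_Q\cap T_{H^c})$, which is what your final formula actually encodes; (ii) in the disconnected case the dimension drop is strictly \emph{smaller} than $|Q_0|-1$, not larger as you assert (a bigger stabilizer means smaller fibers), so that branch as written would not close --- but it is moot, since, as you yourself note, stability already forces connected support, which is precisely the observation with which the paper opens its proof.
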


\begin{proof}\looseness=-1 Let $M$ be a $\theta$-stable representation with a one-dimensional orbit. Put $\Omega := \operatorname{supp}(M)$. Note that stability of $M$ implies connectedness of $(Q_0,\Omega)$. Assume that $\Omega$ is not a $\theta$-stable spanning almost tree. As $M_\Omega$ is contained in the $T$-orbit of $M$ we may assume $M = M_\Omega$. Let $B = \{ \alpha_1,\dots,\alpha_r\}$ be a maximal subset of~$\Omega$ such that $(Q_0,\Omega-B)$ is connected. Then~$r \geq 2$ as~$\Omega$ is neither a spanning almost tree nor a spanning tree. Moreover, $H := \Omega-B$ is a spanning tree (which is not necessarily $\theta$-stable). Let $(t_1,\dots,t_r) \in (k^\times)^r$. Define the representation $M_t$ by
\begin{displaymath}
(M_t)_\alpha = \begin{cases} t_k, & \alpha = \alpha_k, \\ 1, & \alpha \in H, \\ 0, & \text{otherwise}. \end{cases}
\end{displaymath}
Clearly $M_t$ is in the $T$-orbit of $M$. Moreover two such representations $M_t$ and $M_s$ are not isomorphic as a $g \in G_1$ with $g \cdot M_t = M_s$ would need to satisfy $g \cdot M_H = M_H$ which implies $g = 1$ by Lemma~\ref{l:fix_toric} (stability is not necessary for this argument to hold). This shows that the orbit of $[M]$ would be (at least) $r$-dimensional which is a contradiction.
The converse direction is obvious.
\end{proof}

The closure of such a one-dimensional orbit is isomorphic to $\P^1$ and hence contains precisely two fixed points. This means that from a $\theta$-stable spanning almost tree there are precisely two ways to remove an arrow and obtain a $\theta$-stable spanning tree. This does not seem to be obvious from the combinatorics of stable spanning almost trees. Let these arrows be $\alpha_0$ and $\alpha_\infty$. Let $\Omega_0 = \Omega - \{ \alpha_0 \}$ and $\Omega_\infty = \Omega - \{\alpha_\infty\}$. These are the two $\theta$-stable spanning trees which represent the two fixed points in the closure of the orbit of $[M_\Omega]$. The inclusions $\Omega_0^c \supseteq \Omega^c \sub \Omega_\infty^c$ give rise to the maps
\begin{displaymath}
A_{T_{\Omega_0^c}}^*(\pt) \xto{}{r_{\Omega,0}} A_{T_{\Omega^c}}^*(\pt) \xot{}{r_{\Omega,\infty}} A_{T_{\Omega_\infty^c}}^*(\pt),
\end{displaymath}
which send $r_{\Omega,0}(x_{\alpha_0}) = 0$ and $r_{\Omega,\infty}(x_{\alpha_\infty}) = 0$ and act as the identity on the other variables. Let us write $f|_{\Omega^c}$ for the image of a function $f \in \smash{A_{T_{\Omega_0^c}}^*(\pt)_\Q} = \smash{\Q[x_\alpha]_{\alpha \in \Omega_0^c}}$ or $f \in \smash{A_{T_{\Omega_\infty^c}}^*(\pt)_\Q} = \smash{\Q[x_\alpha]_{\alpha \in \Omega_\infty^c}}$ under the maps $r_{\Omega,0}$ or $r_{\Omega,\infty}$, respectively.

\begin{thm} \label{t:gkm}
The pull-back of the embedding of the fixed point locus $i\colon M^\theta(Q,\one)^{T_0} \into M^\theta(Q,\one)$ is the map
\begin{displaymath}
i^*\colon \ A_T^*\big(R(Q,\one)^\theta\big)_\Q \to \bigoplus_H \Q[x_\alpha]_{\alpha \in H^c},
\end{displaymath}
which sends $f$ to $(i_H^*(f))_H$. It is injective and its image consists precisely of those tuples $(f_H)_H$ for which $f_{\Omega_0}|_{\Omega^c} = f_{\Omega_\infty}|_{\Omega^c}$ for every $\theta$-stable spanning almost tree $\Omega$.
\end{thm}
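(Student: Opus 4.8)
The plan is to deduce the statement from Brion's theorem (Theorem~\ref{t:brion}). First I would check its hypotheses: since $Q$ is acyclic and $\theta$ is generic for $\one$, the moduli space $M^\theta(Q,\one)$ is smooth and projective; by Lemma~\ref{l:fix_toric} the $T_0$-fixed points are exactly the classes $[M_H]$ for $\theta$-stable spanning trees $H$; and by the lemma characterising one-dimensional orbits above, the one-dimensional $T_0$-orbits are exactly the orbits $\mathcal{O}_\Omega$ of $[M_\Omega]$ for $\theta$-stable spanning almost trees $\Omega$. Both sets are finite because $Q_1$ is. Under the identifications already set up, namely $A_{T_0}^*(M^\theta(Q,\one))_\Q \cong A_T^*(R(Q,\one)^\theta)_\Q$ together with $A_{T_0}^*(\{[M_H]\})_\Q \cong A_{T_{H^c}}^*(\pt)_\Q = \Q[x_\alpha]_{\alpha \in H^c}$, the restriction of a class to $[M_H]$ is the map $i_H^*$. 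Hence Theorem~\ref{t:brion} (whose conclusion for smooth projective $X$ also yields injectivity of the restriction to $X^{T_0}$) already shows $i^*$ is injective and identifies its image with the tuples $(f_H)_H$ satisfying a GKM-type congruence across every one-dimensional orbit. What remains is to rewrite that congruence in the stated form.

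The core step is to compute, for each $\theta$-stable spanning almost tree $\Omega = \Omega_0 \sqcup \{\alpha_0\} = \Omega_\infty \sqcup \{\alpha_\infty\}$, the character $\chi \in X(T_0)$ by which $T_0$ acts on $\mathcal{O}_\Omega$, and to express it in each of the two relevant charts $\Q[x_\alpha]_{\alpha\in\Omega_0^c}$ and $\Q[x_\alpha]_{\alpha\in\Omega_\infty^c}$. I would parametrise $\mathcal{O}_\Omega$ by the representations $M^{(c)}$, $c \in k^\times$, with $(M^{(c)})_{\alpha_0} = c$, $(M^{(c)})_\alpha = 1$ for $\alpha \in \Omega_0$ and $(M^{(c)})_\alpha = 0$ otherwise; these lie in one $T$-orbit but in pairwise non-isomorphic $PG_\one$-orbits (the same argument as in the proof of the one-dimensional-orbit lemma), and the limit $c \to 0$ is $[M_{\Omega_0}]$. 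Writing $\beta_r^{\epsilon_r}\cdots\beta_1^{\epsilon_1}$ for the unique reduced unoriented path in $\Omega_0$ from $s(\alpha_0)$ to $t(\alpha_0)$ and using surjectivity of $\sigma_{\Omega_0}\colon G_\one \to T_{\Omega_0}$ (the lemma that $\sigma_H$ is surjective for trees $H$), a short computation shows $t.[M^{(c)}] = [M^{(c')}]$ with $c' = t_{\alpha_0}\big(\prod_\nu t_{\beta_\nu}^{\epsilon_\nu}\big)^{-1}c$; thus $T$ acts on $\mathcal{O}_\Omega$ through the character $x_{\alpha_0} - \sum_\nu \epsilon_\nu x_{\beta_\nu} \in X(T)$, which one checks is trivial on the image of $\sigma_Q$ and hence descends to $X(T_0)$. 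Restricting along the isomorphism $T_{\Omega_0^c} \xrightarrow{\sim} T_0$ from Lemma~\ref{l:fix_toric} kills every $x_{\beta_\nu}$ (since $\beta_\nu \in \Omega_0$) and leaves $x_{\alpha_0}$; symmetrically, in the chart of $[M_{\Omega_\infty}]$ the character is $\pm x_{\alpha_\infty}$.

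To finish I would translate the congruence. The character $\chi$ is primitive in $X(T_0)$, being a coordinate function in either chart, and the inclusion $T_{\Omega^c} \subset T_{\Omega_0^c} \cong T_0$ identifies $X(T_0)/\langle\chi\rangle$ with $X(T_{\Omega^c})$ by restriction, so the reduction $S(X(T_0))_\Q \to S(X(T_0))_\Q/(\chi)$ becomes exactly $r_{\Omega,0}$; likewise it becomes $r_{\Omega,\infty}$ from the other end, and the two resulting identifications of $S(X(T_0))_\Q/(\chi)$ with $\Q[x_\alpha]_{\alpha\in\Omega^c}$ coincide, since both are just restriction of characters to $T_{\Omega^c}$. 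Hence $f_{\Omega_0} \equiv f_{\Omega_\infty} \pmod{\chi}$ is equivalent to $f_{\Omega_0}|_{\Omega^c} = f_{\Omega_\infty}|_{\Omega^c}$. Conversely, any pair of fixed points joined by a one-dimensional orbit is of the form $([M_{\Omega_0}], [M_{\Omega_\infty}])$ for a $\theta$-stable spanning almost tree $\Omega$, so letting $\Omega$ range over all of them yields the claimed image.

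The main obstacle I anticipate is the bookkeeping in the last two paragraphs: computing $\chi$ correctly and, more delicately, verifying that Brion's congruence --- stated intrinsically inside $X(T_0)$ --- really descends to an honest equality of elements of the single ring $\Q[x_\alpha]_{\alpha\in\Omega^c}$ once the two fixed points are described in their own, a priori different, coordinate charts. The geometric fact that $\overline{\mathcal{O}_\Omega} \cong \P^1$ contains precisely the two fixed points $[M_{\Omega_0}]$ and $[M_{\Omega_\infty}]$, recorded in the discussion preceding the theorem, is what makes the two ends of each relation unambiguous.
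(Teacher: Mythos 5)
Your proposal is correct and follows the same overall strategy as the paper's proof: verify the hypotheses of Brion's theorem using the two lemmas on fixed points and one-dimensional orbits, compute the character by which $T_0$ acts on the orbit of $[M_\Omega]$, and translate the resulting congruence into the stated matching condition. The only real divergence is in the translation step. The paper explicitly computes the composite isomorphism $\phi\colon T_{\Omega_0^c}\to T_0\to T_{\Omega_\infty^c}$, including the signs $\delta_\alpha$ determined by the two connected components of $(Q_0,\Omega-\{\alpha_0,\alpha_\infty\})$ and a limit argument fixing $\phi(t)_{\alpha_\infty}=t_{\alpha_0}^{-1}$, and then substitutes into $f_{\Omega_\infty}$ and reduces modulo $x_{\alpha_0}$. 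Your observation that both chart isomorphisms, composed with reduction modulo $\chi$, are simply restriction of characters to the common subtorus $T_{\Omega^c}$ (whose map to $T_0$ does not depend on which chart it is viewed in, since both chart isomorphisms are induced by inclusion into $T$) bypasses that computation entirely; it spares you the sign and component bookkeeping, at the cost of not producing the explicit formula for $\phi^*$ that the paper records along the way. Both arguments are complete, and your verification that the orbit character restricts to $x_{\alpha_0}$ on $T_{\Omega_0^c}$ (hence generates the kernel of restriction to $T_{\Omega^c}$) is exactly the input your cleaner reduction needs.
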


The direct sum in the theorem ranges over all $\theta$-stable spanning trees $H$ of $Q$.

\begin{proof}
This is an application of Theorem \ref{t:brion}. Let us show how the statement of our theorem follows from it. Let $\Omega$ be a $\theta$-stable spanning almost tree. Let $C$ be the closure of the $T_0$-orbit of $[M_\Omega]$ in $M^\theta(Q,\one)$. Let $t \in T_{\Omega_0^c}$. We obtain
\begin{displaymath}
(t.M)_\alpha = \begin{cases} t_{\alpha_0}, & \alpha = \alpha_0, \\ 1, & \alpha \in \Omega_0, \\ 0, & \alpha \in \Omega^c. \end{cases}
\end{displaymath}
We identify the $\smash{T_{\Omega_0^c}}$-orbit of $[M_\Omega]$ with $\G_m$ by the entry which corresponds to the arrow $\alpha_0$. Then $T_{\Omega_0^c}$ acts on $\G_m$ by the character $x_{\alpha_0}$. Take the resulting identification of $C$ with $\P^1$. The limit for $t_{\alpha_0} \to 0$ is $[M_{\Omega_0}]$ and the limit for $t_{\alpha_0} \to \infty$ must hence be $[M_{\Omega_\infty}]$. Consider the composition of the isomorphisms of tori $\smash{T_{\Omega_0^c}} \to T_0 \to \smash{T_{\Omega_\infty^c}}$ from Lemma \ref{l:fix_toric}. Call it $\phi$. To determine $\phi$, let $t \in \smash{T_{\Omega_0^c}}$ and find the unique $s \in \smash{T_{\Omega_\infty^c}}$ for which there exists a $g \in G_\one$ such that $g_{t(\alpha)}t_\alpha g_{s(\alpha)}^{-1} = s_\alpha$ for all $\alpha \in Q_1$. The element $g$ is uniquely determined up to scaling. We get four equations:
\begin{gather*}
g_{t(\alpha)}g_{s(\alpha)}^{-1} = 1, \qquad \alpha \in \Omega - \{\alpha_0,\alpha_\infty\}, \\
g_{t(\alpha)}t_\alpha g_{s(\alpha)}^{-1} = s_\alpha \qquad   \alpha \in \Omega^c, \\
g_{t(\alpha_0)}t_{\alpha_0}g_{s(\alpha_0)}^{-1} = 1, \\
g_{t(\alpha_\infty)}g_{s(\alpha_\infty)}^{-1} = s_{\alpha_\infty}.
\end{gather*}
As $\Omega_\infty$ is a spanning tree, $(Q_0,\Omega - \{\alpha_0,\alpha_\infty\})$ is not connected. Let $C_1,C_2 \sub Q_0$ be its connected components. The vertices $i_0 := s(\alpha_0)$ and $j_0 := t(\alpha_0)$ lie in different components, so we may assume $i_0 \in C_1$ and $j_0 \in C_2$. As $\Omega_0$ is connected, $i_\infty := s(\alpha_\infty)$ and $j_\infty := t(\alpha_\infty)$ cannot be contained in the same component. We first consider the case where $i_\infty \in C_1$ and $j_\infty \in C_2$. We get $g_i$ is constant on $C_1$ and on $C_2$. Let its value on $C_1$ be $g_1$ and its value on $C_2$ be $g_2$. Now we obtain
\begin{gather*}
g_2t_{\alpha_0}g_1^{-1}  = 1,\qquad  g_2g_1^{-1} = s_{\alpha_\infty}.
\end{gather*}
This implies $s_{\alpha_\infty} = t_{\alpha_0}^{-1}$ and $g_2g_1^{-1} = t_{\alpha_0}^{-1}$. For $\alpha\colon  i \to j$ in $\Omega^c$ we again have to distinguish three cases. If $i$ and $j$ are in the same component then $s_\alpha = t_\alpha$. If $i \in C_1$ and $j \in C_2$ then we get $s_\alpha = g_2t_\alpha g_1^{-1} = t_{\alpha_0}^{-1}t_\alpha$. Finally if $i \in C_2$ and $j \in C_1$ then we obtain $s_\alpha = g_1t_\alpha g_2^{-1} = t_{\alpha_0}t_\alpha$. We define
\begin{displaymath}
\delta_\alpha := \begin{cases} \hphantom{-}0, & i,j \in C_1 \text{ or } i,j \in C_2, \\ -1, & i \in C_1,\ j \in C_2 ,\\ +1, & i \in C_2,\ j \in C_1. \end{cases}
\end{displaymath}
We get
\begin{displaymath}
\phi(t)_\alpha = \begin{cases} t_{\alpha_0}^{-1}, & \alpha = \alpha_\infty, \\ t_{\alpha_0}^{\delta_\alpha}t_\alpha, & \alpha \in \Omega^c, \\ 1, & \alpha \in \Omega_\infty. \end{cases}
\end{displaymath}
We now deal with the second case where $i_\infty \in C_2$ and $j_\infty \in C_1$. Then $\phi$ looks the same except for $\phi(t)_{\alpha_\infty} = t_{\alpha_0}$. The torus $\smash{T_{\Omega_\infty^c}}$ acts on $C$ by the character $x_{\alpha_\infty}$. But as $\lim\limits_{t_{\alpha_0} \to \infty} \phi(t).[M_\Omega] = \lim\limits_{t_{\alpha_0} \to \infty} t.[M_\Omega] = [M_{\Omega_\infty}]$ we see that $\phi(t)_{\alpha_\infty} = t_{\alpha_0}^{-1}$. So the second case will not occur.

The induced map by $\phi$ on the character lattices is $\phi^*\colon X(\smash{T_{\Omega_\infty^c}}) \to X(\smash{T_{\Omega_0^c}})$ which is given by
\begin{gather*}
\phi^*(x_{\alpha_\infty})  = -x_{\alpha_0},\qquad \phi^*(x_\alpha)  = x_\alpha + \delta_\alpha x_{\alpha_0}
\end{gather*}
for $\alpha \in \Omega^c$. Brion's theorem now tells us that the image of $i^*$ consists precisely of those tuples $(f_H)_H$ for which
\begin{displaymath}
f_{\Omega_0}(x_{\alpha_0},x_\alpha)_{\alpha \in \Omega^c} \equiv f_{\Omega_\infty}(-x_{\alpha_0},x_\alpha+\delta_\alpha x_{\alpha_0})_{\alpha \in \Omega^c}
\end{displaymath}
modulo $x_{\alpha_0}$. This is the same as to say $f_{\Omega_0}|_{\Omega^c} = f_{\Omega_\infty}|_{\Omega^c}$.
\end{proof}

\begin{rem}In \cite{PS:18} Pabiniak and Sabatini find a basis for the $T$-equivariant cohomology ring~$H_T^*(M)$ regarded as a module over $S(X(T))$ for a compact symplectic toric manifold $M$ with torus $T$. Let us briefly recap their result. They fix a moment map and a generic component~$\mu$ of it. A result of Kirwan states that for every $T$-fixed point $p \in M^T$ there exists a class $\nu_p \in H^{2\lambda_p}(M)$ such that its restriction $\nu_p|p \in \smash{H^{2\lambda_p}}(\{p\})$ agrees with the equivariant Euler class of the negative normal bundle $N_p^-$ of~$\mu$ at $p$, while $\nu_p|q = 0$ for every $q \in M^T$ with $\mu(q) < \mu(p)$. Such a class $\nu_p$ is called a Kirwan class. Any collection of Kirwan classes $\{\nu_p\}_{p \in M^T}$ is a basis of~$H_T^*(M)$ over~$S(X(T))$. Kirwan classes, however, are not unique. In \cite[Definition~4.5]{PS:18} a~special type of Kirwan class is defined, the so-called i-canonical class. This is a Kirwan class~$\tau_p$ such that the local index of~$\tau_p$ at~$p$ is~1, whereas the local index at every other fixed point vanishes. In \cite[Theorem~1.2]{PS:18} it is shown that a set $\{\tau_p\}_{p \in M^T}$ of i-canonical classes exists and is unique. It would be interesting to determine the i-canonical classes in the toric case and compare them to our result.
\end{rem}

We end by illustrating this result in two examples.

\begin{ex}Let $Q$ be the $n+1$-Kronecker quiver, $d = \one = (1,1)$ and $\theta = (1,-1)$ as in Example~\ref{e:proj_space}. We have already identified $M^\theta(Q,\one)$ with $\P^n$. The torus $T_0$ is the cokernel $(\G_m)^{n+1}/\G_m$ of the diagonal embedding $\G_m \to \G_m^{n+1}$. So its character group is $X(T_0) = \big\{ \sum_{\nu=0}^n b_\nu x_\nu \,|\,  \sum_{\nu=0}^n b_\nu = 0\big\}$. The ring $S := S(X(T_0))_\Q$ is the subring of $\Q[x_0,\dots,x_n]$ generated by the differences $x_\nu-x_\mu$. The one-dimensional $T_0$-orbits on $\P^n$ are the orbits of the points $\smash{p_{\mu,\nu}} = [0: \dots :0:1:0:\dots:0:1:0:\dots:0]$ with an entry in the $\mu$\textsuperscript{th} and in the $\nu$\textsuperscript{th} position. On that orbit, $T_0$ acts with the character $\pm(x_\nu-x_\mu)$. Brion's description of the image of the localization map (i.e., Theorem~\ref{t:brion}) tells us that a tuple $f = (f_0,\dots,f_n) \in S^{\oplus n+1}$ belongs to $\im i^*$ if and only if $f_\mu \equiv f_\nu$ modulo $x_\nu-x_\mu$ for all $0 \leq \mu < \nu \leq n$.

To match this with the description of Theorem \ref{t:gkm}, we consider the isomorphism
\begin{displaymath}
\bigoplus_{k \neq \nu} \Z x_k \to X(T_0),
\end{displaymath}
which sends $x_k$ to $x_k-x_\nu$.
This provides us with an isomorphism of rings $\psi\colon R := \bigoplus_{\nu=0}^n\! \Q[x_0,\dots,\allowbreak \hat{x_\nu},\dots,x_n] \to S^{\oplus n+1}$. Let $g = (g_0,\dots,g_n) \in R$. We analyze when $\psi(g) = (f_0,\dots,f_n)$ fulfills the condition of Brion's theorem. For $\mu < \nu$ define $\tilde{g}_\mu \in \Q[x_0,\dots,\hat{x_\nu},\dots,x_n]$ as
\begin{displaymath}
\tilde{g}_\mu := g_\mu(x_1-x_\mu,\dots,x_{\mu-1}-x_\mu,x_{\mu+1}-x_\mu,\dots,x_{\nu-1}-x_\mu,-x_\mu,x_{\nu+1}-x_\mu,\dots,x_n-x_\mu).
\end{displaymath}
Then $f_\mu \equiv f_\nu$ modulo $(x_\nu - x_\mu)$ if and only if $\tilde{g}_\mu \equiv g_\nu$ modulo~$x_\mu$. But the reduction of $\tilde{g}_\mu$ modulo~$x_\mu$ in $\Q[x_0,\dots,\hat{x_\nu},\dots,x_n]/(x_\mu) \cong \Q[x_0,\dots,\hat{x_\mu},\dots,\hat{x_\nu},\dots,x_n]$ agrees with the reduction of~$g_\mu$ modulo~$x_\nu$. This shows that the image of $\iota^*$ consists of those $g = (g_0,\dots,g_n) \in R$ such that for all $\mu < \nu$ the images of $g_\mu$ and $g_\nu$ under the maps
\begin{displaymath}
\Q[x_0,\dots,\hat{x_\mu},\dots,x_n] \to \Q[x_0,\dots,\hat{x_\mu},\dots,\hat{x_\nu},\dots,x_n] \ot \Q[x_0,\dots,\hat{x_\nu},\dots,x_n]
\end{displaymath}
agree. This is precisely the description in Theorem \ref{t:gkm}.
\end{ex}

\begin{ex}
Let $Q = K(2,3)$ be the full bipartite quiver with 2 sources and three sinks. A~representation of $Q$ of dimension vector $\one$ is a $(3 \times 2)$-matrix. The structure group $G_\one$ is the torus $\G_m^2 \times \G_m^3$ which acts via
\begin{displaymath}
(g_1,g_2,h_1,h_2,h_3) \cdot \begin{pmatrix} a_{11} & a_{12} \\ a_{21} & a_{22} \\ a_{31} & a_{32} \end{pmatrix} =
\begin{pmatrix} h_1g_1^{-1}a_{11} & h_1g_2^{-1}a_{12} \\ h_2g_1^{-1}a_{21} & h_2g_2^{-1}a_{22} \\ h_3g_1^{-1}a_{31} & h_3g_2^{-1}a_{32} \end{pmatrix}.
\end{displaymath}
We consider the stability condition $\theta = (3,3,-2,-2,-2)$. This stability condition is generic for~$\one$. The resulting moduli space $M^\theta(Q,\one)$ is isomorphic to the blow-up $\operatorname{Bl}_3\big(\P^2\big)$ of $\P^2$ in three points not on a line. This can be seen, for instance, using the results of \cite{FRS:20}: as $\theta$ agrees with the canonical stability condition and ample stability is fulfilled, the moduli space is a Fano variety. Its dimension is two and its Picard rank is 4, so it has to be isomorphic to $\operatorname{Bl}_3\big(\P^2\big)$.

By Proposition \ref{p:taut_toric} we obtain the following description of the $T$-equivariant Chow ring of $R(Q,1)^\theta$ in terms of generators and relations:
\begin{displaymath}
A_T^*\big(R(Q,\one)^\theta\big)_\Q \cong \frac{\Q\left[ \begin{smallmatrix} x_{11} & x_{12} \\ x_{21} & x_{22} \\ x_{31} & x_{32} \end{smallmatrix} \right]}{(x_{j_1i}x_{j_2i} \,|\,  i=1,2,\ 1 \leq j_1 < j_2 \leq 3) + (x_{j1}x_{j2} \,|\,  j=1,2,3)}.
\end{displaymath}
A $\Q$-vector space basis of this algebra is given by all monomials $x^\gamma$ where $\gamma = (\gamma_{ji}) \in M_{3 \times 2}(\Z_{\geq 0})$ is a matrix with at most one non-zero entry in each row and each column.

Now to the fixed points and the one-dimensional orbits of the action of the rank 2 torus $T_0$ on $M^\theta(Q,\one)$. We have to determine the stable spanning trees and the stable spanning almost trees. We describe them in the following picture:
\[
\begin{tikzpicture}[description/.style={fill=white,inner sep=2pt}]
\matrix(m)[matrix of nodes, row sep=2em, column sep=3em, text height=1.5ex, text depth=0.25ex]
{
&[-1.4em] $\left(\begin{smallmatrix} 1 & 1 \\ 1 & 0 \\ 0 & 1 \end{smallmatrix}\right)$& $\left(\begin{smallmatrix} 1 & 0 \\ 1 & 1 \\ 0 & 1 \end{smallmatrix}\right)$&[-1.4em] \\
$\left(\begin{smallmatrix} 0 & 1 \\ 1 & 0 \\ 1 & 1 \end{smallmatrix}\right)$&&& $\left(\begin{smallmatrix} 1 & 0 \\ 0 & 1 \\ 1 & 1 \end{smallmatrix}\right)$ \\
& $\left(\begin{smallmatrix} 0 & 1 \\ 1 & 1 \\ 1 & 0 \end{smallmatrix}\right)$ & $\left(\begin{smallmatrix} 1 & 1 \\ 0 & 1 \\ 1 & 0 \end{smallmatrix}\right)$& \\
};
\path[-, font=\scriptsize]
(m-1-2) edge node[auto] {$\left(\begin{smallmatrix} 1 & 1 \\ 1 & 1 \\ 0 & 1 \end{smallmatrix}\right)$} (m-1-3)
(m-2-1) edge node[auto] {$\left(\begin{smallmatrix} 1 & 1 \\ 1 & 0 \\ 1 & 1 \end{smallmatrix}\right)$}(m-1-2)
(m-1-3) edge node[auto] {$\left(\begin{smallmatrix} 1 & 0 \\ 1 & 1 \\ 1 & 1 \end{smallmatrix}\right)$}(m-2-4)
(m-3-2) edge node[auto] {$\left(\begin{smallmatrix} 0 & 1 \\ 1 & 1 \\ 1 & 1 \end{smallmatrix}\right)$}(m-2-1)
(m-2-4) edge node[auto] {$\left(\begin{smallmatrix} 1 & 1 \\ 0 & 1 \\ 1 & 1 \end{smallmatrix}\right)$}(m-3-3)
(m-3-3) edge node[auto] {$\left(\begin{smallmatrix} 1 & 1 \\ 1 & 1 \\ 1 & 0 \end{smallmatrix}\right)$}(m-3-2)
;
\end{tikzpicture}
\]
The matrices at the vertices of the above graph are the representations $M_H$ which correspond to the 6 stable spanning trees $H$ of $Q$ and the edges are the representations $M_\Omega$ assigned to the 6 stable spanning almost trees $\Omega$ of $Q$. For a stable spanning almost tree $\Omega$, the spanning trees attached to the adjacent vertices correspond to the fixed points which lie in the closure of the one-dimensional orbit associated with $\Omega$. By Theorem~\ref{t:gkm} the pull-back $i^*$ of the embedding of the fixed point locus is the map which is induced by
\begin{gather*}
\Q\left[ \begin{smallmatrix} x_{11} & x_{12} \\ x_{21} & x_{22} \\ x_{31} & x_{32} \end{smallmatrix} \right]  \rightarrow
\Q\left[\begin{smallmatrix} \phantom{x_{32}} & \phantom{x_{32}}\\ & x_{22} \\ x_{31} & \end{smallmatrix}\right]
\oplus   \Q\left[\begin{smallmatrix} x_{11} & \\ & x_{22} \\ \phantom{x_{32}} & \phantom{x_{32}} \end{smallmatrix}\right]
\oplus   \Q\left[\begin{smallmatrix} x_{11} & \\ \phantom{x_{32}} & \phantom{x_{32}} \\ & x_{32} \end{smallmatrix}\right]
\oplus   \Q\left[\begin{smallmatrix} \phantom{x_{32}} & \phantom{x_{32}} \\ x_{21} & \\ & x_{32} \end{smallmatrix}\right]\\
\hphantom{\Q\left[ \begin{smallmatrix} x_{11} & x_{12} \\ x_{21} & x_{22} \\ x_{31} & x_{32} \end{smallmatrix} \right]  \rightarrow}{}
\oplus   \Q\left[\begin{smallmatrix} & x_{12} \\ x_{21} & \\ \phantom{x_{32}} & \phantom{x_{32}} \end{smallmatrix}\right]
\oplus   \Q\left[\begin{smallmatrix} & x_{12} \\ \phantom{x_{32}} & \phantom{x_{32}} \\ x_{31} & \end{smallmatrix}\right],
\\
f \rightarrow\bigg(
f_{\left(\begin{smallmatrix} 1 & 1 \\ 1 & 0 \\ 0 & 1 \end{smallmatrix}\right)},
f_{\left(\begin{smallmatrix} 0 & 1 \\ 1 & 0 \\ 1 & 1 \end{smallmatrix}\right)},
f_{\left(\begin{smallmatrix} 0 & 1 \\ 1 & 1 \\ 1 & 0 \end{smallmatrix}\right)},
f_{\left(\begin{smallmatrix} 1 & 1 \\ 0 & 1 \\ 1 & 0 \end{smallmatrix}\right)},
f_{\left(\begin{smallmatrix} 1 & 0 \\ 0 & 1 \\ 1 & 1 \end{smallmatrix}\right)},
f_{\left(\begin{smallmatrix} 1 & 0 \\ 1 & 1 \\ 0 & 1 \end{smallmatrix}\right)} \bigg),
\end{gather*}
where the components are defined by
\begin{gather*}
f_{\left(\begin{smallmatrix} 1 & 1 \\ 1 & 0 \\ 0 & 1 \end{smallmatrix}\right)} = f\left(\begin{smallmatrix} 0 & 0 \\ 0 & x_{22} \\ x_{31} & 0 \end{smallmatrix}\right),\qquad
f_{\left(\begin{smallmatrix} 0 & 1 \\ 1 & 0 \\ 1 & 1 \end{smallmatrix}\right)} = f\left(\begin{smallmatrix} x_{11} & 0 \\ 0 & x_{22} \\ 0 & 0 \end{smallmatrix}\right),\qquad
f_{\left(\begin{smallmatrix} 0 & 1 \\ 1 & 1 \\ 1 & 0 \end{smallmatrix}\right)} = f\left(\begin{smallmatrix} x_{11} & 0 \\ 0 & 0 \\ 0 & x_{32} \end{smallmatrix}\right), \\
f_{\left(\begin{smallmatrix} 1 & 1 \\ 0 & 1 \\ 1 & 0 \end{smallmatrix}\right)} = f\left(\begin{smallmatrix} 0 & 0 \\ x_{21} & 0 \\ 0 & x_{32} \end{smallmatrix}\right),\qquad
f_{\left(\begin{smallmatrix} 1 & 0 \\ 0 & 1 \\ 1 & 1 \end{smallmatrix}\right)} = f\left(\begin{smallmatrix} 0 & x_{12} \\ x_{21} & 0 \\ 0 & 0 \end{smallmatrix}\right),\qquad
f_{\left(\begin{smallmatrix} 1 & 0 \\ 1 & 1 \\ 0 & 1 \end{smallmatrix}\right)} = f\left(\begin{smallmatrix} 0 & x_{12} \\ 0 & 0 \\ x_{31} & 0 \end{smallmatrix}\right).
\end{gather*}
The image of $i^*$ consist of all tuples
\begin{displaymath}
p =
\left( p_{\left(\begin{smallmatrix} 1 & 1 \\ 1 & 0 \\ 0 & 1 \end{smallmatrix}\right)},
p_{\left(\begin{smallmatrix} 0 & 1 \\ 1 & 0 \\ 1 & 1 \end{smallmatrix}\right)},
p_{\left(\begin{smallmatrix} 0 & 1 \\ 1 & 1 \\ 1 & 0 \end{smallmatrix}\right)},
p_{\left(\begin{smallmatrix} 1 & 1 \\ 0 & 1 \\ 1 & 0 \end{smallmatrix}\right)},
p_{\left(\begin{smallmatrix} 1 & 0 \\ 0 & 1 \\ 1 & 1 \end{smallmatrix}\right)},
p_{\left(\begin{smallmatrix} 1 & 0 \\ 1 & 1 \\ 0 & 1 \end{smallmatrix}\right)} \right),
\end{displaymath}
such that the following six conditions hold:
\begin{gather*}
p_{\left(\begin{smallmatrix} 1 & 1 \\ 1 & 0 \\ 0 & 1 \end{smallmatrix}\right)}\Big( \begin{smallmatrix} \phantom{x_{32}} & \phantom{x_{32}} \\ & x_{22} \\ 0 & \end{smallmatrix} \Big)
 = p_{\left(\begin{smallmatrix} 0 & 1 \\ 1 & 0 \\ 1 & 1 \end{smallmatrix}\right)}\left(\begin{smallmatrix} 0 & \\ & x_{22} \\ \phantom{x_{32}} & \phantom{x_{32}} \end{smallmatrix}\right),\qquad
p_{\left(\begin{smallmatrix} 0 & 1 \\ 1 & 0 \\ 1 & 1 \end{smallmatrix}\right)}\left(\begin{smallmatrix} x_{11} & \\ & 0 \\ \phantom{x_{32}} & \phantom{x_{32}} \end{smallmatrix}\right)
 = p_{\left(\begin{smallmatrix} 0 & 1 \\ 1 & 1 \\ 1 & 0 \end{smallmatrix}\right)}\Big( \begin{smallmatrix} x_{11} & \\ \phantom{x_{32}} & \phantom{x_{32}} \\ & 0 \end{smallmatrix} \Big), \\
p_{\left( \begin{smallmatrix} 0 & 1 \\ 1 & 1 \\ 1 & 0 \end{smallmatrix} \right)}\Big( \begin{smallmatrix} 0 & \\ \phantom{x_{32}} & \phantom{x_{32}} \\ & x_{32} \end{smallmatrix} \Big)
 = p_{\left( \begin{smallmatrix} 1 & 1 \\ 0 & 1 \\ 1 & 0 \end{smallmatrix} \right)}\Big( \begin{smallmatrix} \phantom{x_{32}} & \phantom{x_{32}} \\ 0 & \\ & x_{32} \end{smallmatrix} \Big),\qquad
p_{\left( \begin{smallmatrix} 1 & 1 \\ 0 & 1 \\ 1 & 0 \end{smallmatrix} \right)}\Big( \begin{smallmatrix} \phantom{x_{32}} & \phantom{x_{32}} \\ x_{21} & \\ & 0 \end{smallmatrix} \Big)
 = p_{\left( \begin{smallmatrix} 1 & 0 \\ 0 & 1 \\ 1 & 1 \end{smallmatrix} \right)}\Big( \begin{smallmatrix} & 0 \\ x_{21} & \\ \phantom{x_{32}} & \phantom{x_{32}} \end{smallmatrix} \Big), \\
p_{\left( \begin{smallmatrix} 1 & 0 \\ 0 & 1 \\ 1 & 1 \end{smallmatrix} \right)}\Big( \begin{smallmatrix} & x_{12} \\ 0 & \\ \phantom{x_{32}} & \phantom{x_{32}} \end{smallmatrix} \Big)
 = p_{\left( \begin{smallmatrix} 1 & 0 \\ 1 & 1 \\ 0 & 1 \end{smallmatrix} \right)}\Big( \begin{smallmatrix} & x_{12} \\ \phantom{x_{32}} & \phantom{x_{32}} \\ 0 & \end{smallmatrix} \Big),\qquad
p_{\left( \begin{smallmatrix} 1 & 0 \\ 1 & 1 \\ 0 & 1 \end{smallmatrix} \right)}\Big( \begin{smallmatrix} & 0 \\ \phantom{x_{32}} & \phantom{x_{32}} \\ x_{31} & \end{smallmatrix} \Big)
 = p_{\left( \begin{smallmatrix} 1 & 1 \\ 1 & 0 \\ 0 & 1 \end{smallmatrix} \right)}\Big( \begin{smallmatrix} \phantom{x_{32}} & \phantom{x_{32}} \\ & 0 \\ x_{31} & \end{smallmatrix} \Big).
\end{gather*}
A basis of the image is given by the following elements:
\begin{alignat*}{3}
& i^*(1)= (1,1,1,1,1,1), &&& \\
& i^*\Big(t^{\left(\begin{smallmatrix} 0 & 0 \\ 0 & m \\ 0 & 0 \end{smallmatrix}\right)}\Big)= \big(x_{22}^m,x_{22}^m,0,0,0,0\big),\qquad &&
i^*\Big(t^{\left(\begin{smallmatrix} m & 0 \\ 0 & 0 \\ 0 & 0 \end{smallmatrix}\right)}\Big)  = \big(0,x_{11}^m,x_{11}^m,0,0,0\big), & \\
& i^*\Big(t^{\left(\begin{smallmatrix} 0 & 0 \\ 0 & 0 \\ 0 & m \end{smallmatrix}\right)}\Big)= \big(0,0,x_{32}^m,x_{32}^m,0,0\big),\qquad &&
i^*\Big(t^{\left(\begin{smallmatrix} 0 & 0 \\ m & 0 \\ 0 & 0 \end{smallmatrix}\right)}\Big)  = \big(0,0,0,x_{21}^m,x_{21}^m,0\big), & \\
& i^*\Big(t^{\left(\begin{smallmatrix} 0 & m \\ 0 & 0 \\ 0 & 0 \end{smallmatrix}\right)}\Big)= \big(0,0,0,0,x_{12}^m,x_{12}^m\big),\qquad &&
i^*\Big(t^{\left(\begin{smallmatrix} 0 & 0 \\ 0 & 0 \\ m & 0 \end{smallmatrix}\right)}\Big)= \big(x_{31}^m,0,0,0,0,x_{31}^m\big),& \\
&i^*\Big(t^{\left(\begin{smallmatrix} 0 & 0 \\ 0 & m \\ n & 0 \end{smallmatrix}\right)}\Big)= \big(x_{22}^mx_{31}^n,0,0,0,0,0\big), \qquad&&
i^*\Big(t^{\left(\begin{smallmatrix} n & 0 \\ 0 & m \\ 0 & 0 \end{smallmatrix}\right)}\Big)= \big(0,x_{22}^mx_{11}^n,0,0,0,0\big),& \\
&i^*\Big(t^{\left(\begin{smallmatrix} n & 0 \\ 0 & 0 \\ 0 & m \end{smallmatrix}\right)}\Big)= \big(0,0,x_{32}^mx_{11}^n,0,0,0\big),\qquad &&
i^*\Big(t^{\left(\begin{smallmatrix} 0 & 0 \\ n & 0 \\ 0 & m \end{smallmatrix}\right)}\Big)= \big(0,0,0,x_{32}^mx_{21}^n,0,0\big),& \\
&i^*\Big(t^{\left(\begin{smallmatrix} 0 & m \\ n & 0 \\ 0 & 0 \end{smallmatrix}\right)}\Big)= \big(0,0,0,0,x_{12}^mx_{21}^n,0\big),\qquad &&
i^*\Big(t^{\left(\begin{smallmatrix} 0 & m \\ 0 & 0 \\ n & 0 \end{smallmatrix}\right)}\Big)= \big(0,0,0,0,0,x_{12}^mx_{31}^n\big).&
\end{alignat*}
\end{ex}

\subsection*{Acknowledgements}

I would like to thank Markus Reineke and Silvia Sabatini for inspiring discussions on this subject. I also want to thank the two referees for their valuable comments. At the time this research was conducted I was supported by the DFG SFB/TR 191 ``Symplectic structures in geometry, algebra, and dynamics''.

\pdfbookmark[1]{References}{ref}
\LastPageEnding

\end{document}